\definecolor{newcolor}{rgb}
{.8,.349,.1}
\newtheorem*{notation}{Notation}
\newtheorem{theorem}{Theorem}[section]
\newtheorem{definition}{Definition}[section]
\newtheorem{proposition}{Proposition}[section]
\newtheorem{corollary}[theorem]{Corollary}
\newtheorem*{bem}{Bounded Entropy Method}
\newenvironment{bemthod}
  {\begin{mdframed}\begin{bem}}
  {\end{bem}\end{mdframed}} 
\newtheorem*{gen}{Generic Method}
\newenvironment{genthod}
{\begin{mdframed}\begin{gen}}
{\end{gen}\end{mdframed}}  
\newcommand{\algrule}[1][.2pt]{\par\vskip.3\baselineskip\hrule height #1\par\vskip.3\baselineskip}
\journal{Journal of Computational Physics}
\begin{document}

\verso{Bohyun Kim \textit{etal}}

\begin{frontmatter}

\title{A positivity-preserving numerical method for a thin liquid film on a vertical cylindrical fiber}

\author[1]{Bohyun \snm{Kim}\corref{cor1}}
\cortext[cor1]{Corresponding author:}
\ead{bohyunk@math.ucla.edu}
\author[2]{Hangjie \snm{Ji}}
\ead{hangjie_ji@ncsu.edu}
\author[1,3]{Andrea L. \snm{Bertozzi}}
\ead{bertozzi@g.ucla.edu}
\author[3]{Abolfazl \snm{Sadeghpour}}
\ead{abolfazlsad@ucla.edu}
\author[3]{Y. Sungtaek \snm{Ju}}
\ead{sungtaek.ju@ucla.edu}

\address[1]{Department of Mathematics, University of California, Los Angeles, CA 90095, USA}
\address[2]{Department of Mathematics, North Carolina State University, Raleigh, NC 27695, USA}
\address[3]{Department of Mechanical and Aerospace Engineering, University of California, Los Angeles, CA 90095, USA}

\received{TBD}
\finalform{TBD}
\accepted{TBD}
\availableonline{TBD}
\communicated{TBD}

\begin{abstract}
When a thin liquid film flows down on a vertical fiber, one can observe the complex and captivating interfacial dynamics of an unsteady flow. Such dynamics are applicable in various fluid experiments due to their high surface area-to-volume ratio. Recent studies verified that when the flow undergoes regime transitions, the magnitude of the film thickness changes dramatically, making numerical simulations challenging. In this paper, we present a computationally efficient numerical method that can maintain the positivity of the film thickness as well as conserve the volume of the fluid under the coarse mesh setting. A series of comparisons to laboratory experiments and previously proposed numerical methods supports the validity of our numerical method. We also prove that our method is second-order consistent in space and satisfies the entropy estimate.
\end{abstract}

\end{frontmatter}

%% main text
\section{Introduction} 
\label{section: Introduction}
%\textcolor{black}

Thin-film flows over fibers exhibit complex dynamical properties due to interplay among various forces, such as the surface tension, viscous force, gravity, and inertia force.  In the Rayleigh instability regime, an initially uniform flow quickly breaks up into regularly spaced beads, and forms traveling waves in the presence of gravity along the fiber direction \cite{quere1990thin,kalliadasis2011falling}. The beaded morphology creates an array of localized high-curvature regions that act as radial sinks, making it attractive for devices for heat and mass transfer along the liquid-gas interfaces \cite{sadeghpour2019water, gabbard2021asymmetric}.

These thin-film flows have applications in gas absorption \cite{uchiyama2003gas,grunig2012mass,chinju2000string}, heat exchange \cite{zeng2017experimental,zeng2018thermohydraulic}, microfluidics \cite{gilet2009digital},  desalination \cite{sadeghpour2019water}, and others. The wide variety of potential applications attracted theoretical studies over the last few decades \cite{quere1990thin,kalliadasis2011falling,craster2009dynamics, chang1999mechanism,duprat2009spatial,ruyer2008modelling,sadeghpour2017effects,quere1999fluid}.
\begin{figure}
    \centering
    \includegraphics[scale=0.17]{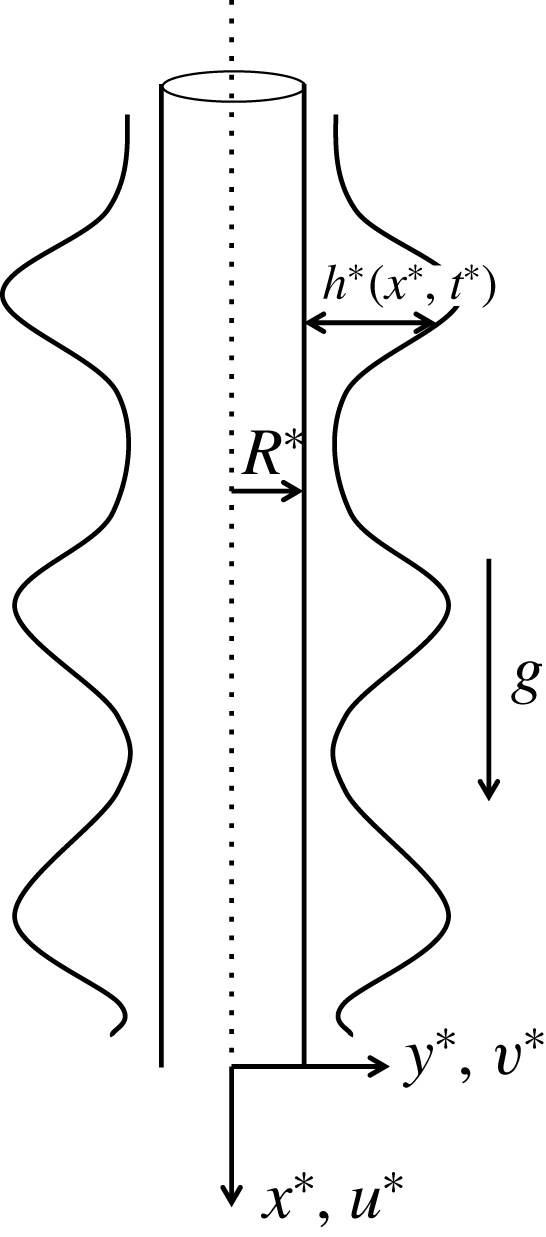}
    \caption{Illustration of a thin liquid film flowing down a vertical fiber. {$R^{*}$ represents the radius of a vertical cylinder, $h^{*}$ represents the thickness of the liquid film, $g$ represents the gravitational constant, and $u^*, v^*$ represent velocity components in the axial and radial direction. Dimensional variables are indicated with $*$ while we drop $*$ to represent corresponding dimensionless variables (see equation \eqref{generic fiber}). Ji et al., Dynamics of thin liquid films on vertical cylindrical fibres, Journal of Fluid Mechanics (2019), vol. 865, 303-327, reproduced with permission.} }
    \label{fig:my_label}
\end{figure}
The fundamental component determining the profile of the thin liquid film on a vertical fiber is surface tension, which has a stabilizing effect on the axial curvatures, and destabilizing effect on the azimuthal curvatures of the interface \cite{kliakhandler2001viscous}. In addition, other factors increasing the flow's complexity are the cylindrical geometry of the fiber and the gravitational force. Experimentally, interfacial instabilities of the flow have been studied over decades \cite{quere1990thin,quere1999fluid}. Kliakhandler et al. experimentally characterized the three distinct regimes of interfacial patterns $(a)$-$(c)$ \cite{kliakhandler2001viscous}. In this paper, we use the convention by Ji et al. \cite{ji2019dynamics} and call $(a)$-$(c)$ regimes convective, Rayleigh-Plateau, and isolated droplet regimes. The convective regime, observed when the flow rate is high, corresponds to the flow profile where irregular droplets collide with each other. The Rayleigh-Plateau regime corresponds to the flow profile, where beaded traveling waves propagate nearly constantly. The isolated droplet regime, observed when the flow rate is low, corresponds to the flow profile where small wavy patterns follow well-separated large droplets. The distinct dynamics of each regime and its transition is extensively studied, both theoretically and experimentally \cite{kalliadasis2011falling,quere1999fluid,ji2019dynamics,ruyer2009film,ruyer2012wavy,ji2021travelling}. 

In this paper, we consider reduced-order models of the Navier-Stokes equations incorporating linear and nonlinear effects of the flow. Li {\&} Chao \cite{li2020marangoni} summarize a few notable methods: the gradient expansion method \cite{ji2019dynamics,frenkel1992nonlinear,craster2006viscous,halpern2017slip}, the integral method \cite{sisoev2006film,trifonov1992steady}, the weighted residual method \cite{duprat2009spatial,ruyer2008modelling,ruyer2012wavy}, and the energy integral method \cite{novbari2009energy}. The models are often classified according to the size of the Reynolds number. For the low Reynolds number cases, the flow profile is approximated by the Stokes equations combined with the lubrication approximation \cite{ji2019dynamics,craster2006viscous}. For moderate Reynolds number cases, one incorporates inertial terms in the governing equation using the weighted residual boundary integral method \cite{duprat2009spatial,ruyer2008modelling}. Many of the models are verified against the experimental data \cite{duprat2009spatial,ruyer2008modelling}. 
For example, a recent study by Ji et al. shows a good agreement with experimental data by correctly predicting bead velocities, flow profiles, and regime transition bifurcation \cite{ji2019dynamics}.

A major challenge is that fiber coating equations are extremely difficult to solve both numerically and analytically. They are typically fourth-order degenerate nonlinear parabolic equations due to the surface tension in the dynamics. We consider the following model from \cite{ji2019dynamics}{:}  
\begin{equation}
\begin{split} \label{generic fiber}
    &\frac{\partial }{\partial t} \left(h+ \frac{\alpha}{2}h^2\right) + \frac{\partial}{\partial x}\mathcal{M}(h)+ \frac{\partial}{\partial x}\left[\mathcal{M}(h)\frac{\partial p}{\partial x} \right] = 0,  \\
    & \mathcal{M}(h) = O(h^n) ,\,\, p = \frac{\partial^2 h}{\partial x^2}-\mathcal{Z}(h).
\end{split}
\end{equation}

Equation~\eqref{generic fiber} is an evolution equation of {dimensionless} film thickness $h(x,t)$. From left to right, 

\begin{itemize}
    \item $\frac{\partial}{\partial t}(h+\frac{\alpha}{2}h^2)$ denotes the mass change over time where $\alpha = \mathcal{H}\slash{ R^{*}} \geq 0$ is the aspect ratio between the characteristic {radial} length scale of film thickness $\mathcal{H}$ to the fiber radius {$R^{*}$}. 

    \item $\mathcal{M}(h)$ is often referred to as the mobility function that describes the hydrodynamic interactions of the transverse waves. Many times, $\mathcal{M}(h) = O(h^n)$. For example,  setting {$\mathcal{M}(h) = h^3 $} corresponds to the no-slip boundary condition, and setting {$\mathcal{M}(h)=h^3 + \beta h^n$} for $n \in (0,3)$ corresponds to various Navier-slip conditions (cf. \cite{carro1995viscous}). The smoothness of $\mathcal{M}(h)$ near $h = 0$ determines the qualitative behavior of solutions at zero \cite{grun2001simulation}.

    \item The pressure $p$ consists of two terms - the linearized curvature $\frac{\partial^2 h}{\partial x^2}$, representing the streamwise surface tension, and the $\mathcal{Z}(h)$, representing other nonlinear pressure effects. $\mathcal{Z}(h)$ often contains a destabilizing surface tension term that arises from the azimuthal curvature but can also include other terms. %One can view $\mathcal{Z}(h)$ as a derivative of $\mathcal{U}(h)$, which is the potential energy.
\end{itemize}

Equation~\eqref{generic fiber} is considered state of the art for this problem because it quantitatively agrees with bead velocities, flow profiles, and regime transition bifurcations as compared to experiments. Previously, the model by Kliakhandler et al. \cite{kliakhandler2001viscous} incorporated fully nonlinear curvature to capture the qualitative behavior of the Rayleigh-Plateau and isolated droplet regime. Nevertheless, this model overestimated the beads’ velocity by 40\%. Craster \& Matar \cite{craster2006viscous} revisited this idea and presented an asymptotic model describing Rayleigh-Plateau and isolated droplet regime but again overestimated the bead velocity. Their model also identified the Rayleigh-Plateau regime to be transient rather than a stationary state. Duprat et al. \cite{duprat2007absolute}, and Smolka et al. \cite{smolka2008dynamics} further studied regime transitions but predicting the regime transitions remained challenging. Ji et al.’s film stabilization model (FSM) \cite{ji2019dynamics} improved the preceding models by incorporating a film stabilization term among generalized pressure terms. This stabilization term was inspired by the attractive part of the long-range apolar van der Waals forces, which are carefully studied for the well-wetting liquids \cite{reisfeld1992non,bonn2009wetting}. One can see that simulating such complex models is a delicate procedure. Thus, it is vital to have a robust numerical method for simulating complex spatiotemporal dynamics to predict flow profiles and regime transitions.  

The degeneracy of the mobility function $\mathcal{M}(h)$ and the complex nonlinear pressure terms $\mathcal{Z}(h)$ are two hurdles one needs to clear to construct a robust numerical method. First, the degeneracy of the mobility function presents a substantial challenge in numerically solving equation~\eqref{generic fiber} since the solution may lose regularity as $h \to 0$. Second, the nonlinear term $\mathcal{Z}(h)$ in pressure $p$ complicates the problem further since it is often relatively large in magnitude as $h \to 0$. As a result, the numerical method can suffer from instabilities as $h \to 0$. Therefore, keeping $h$ positive is not only crucial for the solution to be physically meaningful but also important for the solution to be accurate.  Fortunately, we found similarities between equation~\eqref{generic fiber} and many lubrication-type equations and realized we could view equation~\eqref{generic fiber} as a variant of a lubrication-type equation with generalized pressure \cite{grun2001simulation, oron1997long,bertozzi2001dewetting}.
\begin{align}
\label{lubrication}
    \frac{\partial h}{\partial t} + \frac{\partial}{\partial x}\left(\mathcal{M}(h)  \frac{\partial p}{\partial x}\right)  = 0 \quad p = \frac{\partial^2 h}{\partial x^2}  - \mathcal{Z}(h)&\quad \text{where} \quad f(h) \sim h^{n} \quad \text{as} \quad h \to 0.
\end{align}

One may see that setting $\alpha = 0$ and $\frac{\partial}{\partial x}\mathcal{M}(h) = 0$ in equation~\eqref{generic fiber} results in equation~\eqref{lubrication}. Setting $\alpha = 0$ would mean neglecting the effect of the fiber, and $\frac{\partial}{\partial x}\mathcal{M}(h) = 0$ would mean neglecting the advection effect by liquid traveling downward. Such experimental and theoretical settings are discussed in various studies devoted to the lubrication theory so that we can take advantage of them \cite{grun2001simulation,becker2003complex,lu2007diffuse,maki2010tear,grun2003convergence,bertozzi2001dewetting}. We know the solution of \eqref{lubrication} is smooth whenever the solution is positive but typically loses its regularity as the solution $h \to 0$ due to the degeneracy of the equation \cite{bertozzi1994singularities,bertozzi1996symmetric}. We also know that the nonlinear pressure terms often introduce a large numerical instability as $h \to 0$, making it challenging to maintain the positive numerical solution \cite{grun2001simulation,grun2003convergence}. Examples of fiber coating problems include $\mathcal{Z}(h)=-(\alpha/\epsilon)^2h$ in \cite{yu2013velocity}, assuming the thickness of the film is much smaller than the fiber radius ($\mathcal{H}\ll {R^{*}}$). Craster \& Matar \cite{craster2006viscous} used ${\mathcal{Z}}(h)=\frac{\alpha}{\eta(1+\alpha h)}$, assuming the film thickness comparable to the fiber radius  ($\alpha = O(1)$). Ji et al. \cite{ji2019dynamics} used that ${\mathcal{Z}}(h)=\frac{\alpha}{\eta(1+\alpha h)}-\frac{{A_H}}{h^{3}}$.
{The parameters $A_{H}$ and $\eta$ are discussed in more detail in Section 5}.  In both the Craster \& Matar{'s} model and Ji et al.{'s} model, we can expect numerical challenges when $h$ is small.  Indeed, we show in {S}ection \ref{section: comparison of numerical schemes} that the numerical method used in \cite{ji2019dynamics} can generate a false singularity as $h\to 0$ when the spatial grid size is underresolved. In other words, although the analytical solution of ~\eqref{generic fiber} is positive everywhere, the solution produced by a naive numerical method can produce negative values within some range of the solution when the grid size is underresolved. Such numerical methods can be quite difficult to extend to higher dimensions where grid refinement is computationally expensive. We also show that the negativity further prevents calculating the solution after the singularity. Thus, it is desirable to have a positivity-preserving numerical method that can perform well at different grid resolutions without spurious numerical singularities. 

Constructing {positivity-preserving} methods for partial differential equations (PDEs) is addressed in a wealth of literature yet most of them are limited to the {first-order} or second-order equations \cite{zhang2010positivity,zhang2011maximum,droniou2011construction,du2021maximum}. Equations above the second order have no maximum or comparison principles, and higher-order spatial derivatives make the numerical system extremely stiff. Numerical methods for {fourth-order} or higher-order equations with positivity-preserving properties have received far less attention. Early works include  \cite{grun2003convergence,barrett1998finite,zhornitskaya1999positivity,grun2000nonnegativity} and make use of entropy estimates to prove positivity. Some of the recent approaches use cut-off, or Lagrange multiplier methods which have a limitation in conserving mass or maintaining smoothness \cite{li2020arbitrarily,lu2013cutoff}. Here we introduce a convex-splitting method that preserves physical quantities like energy, entropy, and mass \cite{grun2001simulation,chen2019positivity,dong2019positivity,hu2020fully} which treats the stabilizing terms implicitly and the destabilizing terms explicitly. A few methods are unconditionally stable \cite{eyre1998unconditionally,vollmayr2003fast} which include the scalar auxiliary variable method by Huang et al. \cite{huang2022bound}. The applications of these methods are to solve Cahn-Hilliard or Hele-Shaw cell-type equations. 

This paper presents a positivity-preserving numerical scheme that works on a general family of lubrication-type equations on cylindrical geometries. Positivity-preserving numerical methods have not been studied in the context of fiber coating, especially in the regime that is most relevant to physical experiments. The structure of the paper follows. In {S}ection~\ref{section: PDE}, we prove properties that the PDE~\eqref{generic fiber} holds and discuss how the PDE imparts such properties to our numerical methods. In {S}ection~\ref{section: PPS formulation}, we introduce our numerical method and the state of art method used in Ji et al.~\cite{ji2019dynamics}. In {S}ection 4, we present proof of the positivity and the consistency of our method. Section~\ref{section: numerical simulation} contains numerical simulations of our methods. In particular, in {S}ection~\ref{section: comparison of numerical schemes}, we compare simulations of our method with simulations of the state of the art method while in {S}ection~\ref{section: comparison with experiment}, we compare simulations of our method with laboratory experimental data. We also demonstrate how to employ adaptive time stepping to efficiently implement our method in {S}ection~\ref{section: adaptive time stepping}. An example without any numerical singular behavior is presented in {S}ection~\ref{sec:nosing} whereas an example with a finite time numerical singular behavior is presented in {S}ection~\ref{sec: sing}. We also compare the CPU time of simulating our method and the state of the art method in {S}ection~\ref{section: computational efficiency}. Finally, in {S}ection~\ref{section: conclusion}, we conclude our paper with a few remarks and suggest future research directions. 

\section{Properties of the partial differential equation}
\label{section: PDE}
This section investigates two essential properties of the continuous fiber coating equation~\eqref{generic fiber}. We ensure that our numerical method preserves the discrete equivalent of the properties. We consider the following initial-boundary value problem:
\begin{equation*}
(P)\begin{dcases}  
&\frac{\partial }{\partial t} \left(h+ \frac{\alpha}{2}h^2\right) + \frac{\partial}{\partial x}\left[\mathcal{M}(h)\left(1+ \frac{\partial p}{\partial x} \right)\right] = 0 \,\, \text{in} \,\,  L_{T} = (0,L) \times (0,T) \subset \mathbb{R}^2,\\
& p =\frac{\partial^2 h}{\partial x^2} - \mathcal{Z}_{+}(h)-  \mathcal{Z}_{-}(h),\\
& [0,L]-\text{periodic boundary conditions},\\
& h(x,0) = h_0 (x) > 0. 
\end{dcases}
\end{equation*}

The main difference from previous equation \eqref{generic fiber} is that we split $\mathcal{Z}(h)$ into two parts: $\mathcal{Z}_{+}(h)$ and $\mathcal{Z}_{-}(h)$, where $\mathcal{Z'}_{+}(h) \geq 0$ and $\mathcal{Z'}_{-}(h) \leq 0$. Such splittings are not generally unique but useful in the design of stable numerical schemes. {Examples of convex-concave splitting can be found in many numerical works of Cahn-Hilliard or thin-film equations \cite{barrett1998finite, eyre1998unconditionally,doi:10.1137/0730084}.} An example is discussed in {S}ection~\ref{section: comparison of numerical schemes}. We assume periodic boundary conditions for simplicity and a positive initial condition to match the physical setting.

Here we assume that a smooth positive solution exists to the problem $(P)$. The existence of a solution to problems such as $(P)$ has been studied in depth \cite{ji2021travelling, bertozzi1994lubrication, bernis1990higher}. The general procedure is like this. First, one applies a regularization technique to problem $(P)$ to overcome the degeneracy and make the problem uniformly parabolic. The boundary condition can be extended to the whole line using a proper continuation technique such as the one suggested in \cite{solonnikov1965boundary}. The well-known parabolic Schauder estimates \cite{solonnikov1965boundary, desbrow_1971, 10.2307/24900462} guarantees a unique solution in a small time interval say, $L_{\sigma} = (0,L)\times (0,\sigma)$. In the end, the limit of the regularized solution results in a smooth, positive solution. We direct our readers to \cite{ji2021travelling, bernis1990higher} for the full derivation. We believe a similar derivation is possible through the canonical approach although continuation of solutions past the initial small time interval requires a priori bounds on certain norms. A full discussion of this problem is beyond the scope of this paper.

The key idea of developing a positivity-preserving numerical method is to formulate an entropy estimate for the continuous problem (P). Such an estimate guarantees the positivity of solutions in the continuous setting. Therefore, designing a numerical method that satisfies the discrete equivalent of the entropy estimate will result in a positivity-preserving numerical method. For our problem (P), we define entropy $G(h)$ so that its derivative $G'(h)$ satisfies 
\begin{equation*}
     G'(h) = \left(1+\alpha h\right)\int_{A}^{h}\frac{1}{\mathcal{M}(s)} \,ds ,\,\,\text{for some fixed } A> 0. 
\end{equation*}
We point out that the positivity proof for a continuous solution in {S}ection~\ref{section: PDE}, the definition of numerical methods in {S}ection~\ref{section: PPS formulation}, and the positivity proof for a discrete solution in {S}ection~\ref{section:positivity proof discrete solutions} do not explicitly involve the constant $A > 0$. In other words, $A$ is only involved in $G'(h)$ to ensure that it is well-defined. We claim that solutions to the problem (P) satisfy conservation of mass and an entropy estimate.

\begin{proposition}
\label{prop pde}
Suppose that there exists a solution $h \in C^4(L_{T})$ of $(P)$, where $L_{T}= [0,L)\times [0,T)$. Suppose we further assume
\begin{align*}
&\mathcal{M}(h) = O(h^n), \,\,\mathcal{M}(h) \geq 0,\\
   & \mathcal{Z}_{+}, \,\,\mathcal{Z}_{-} \in C^2(\mathbb{R}^{+}),\,\, \text{ and } \,\, \mathcal{Z'}_{+}(h) \geq 0,\,\, \mathcal{Z'}_{-}(h )\leq 0.
\end{align*}
Then, the solution $h$ satisfies the following two properties{:} 
\begin{align*}
&(I) \,\, \int_0^L h(x,T)+ \frac{\alpha}{2} h^2(x,T) \,dx = \int_0^L h(x,0)+ \frac{\alpha}{2} h^2(x,0) \, dx \,\, (\text{Conservation of mass}),\\
&(II) \int_0^L G(h(x,T))\, dx \leq \int_0^L G(h(x,0))\, dx +\int_{L_{T}}\left(\frac{\mathcal{Z}_{-}(h)}{2}\right)^2dx dt \,\,\text{(Entropy estimate}).
\end{align*}
\end{proposition}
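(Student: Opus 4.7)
Part (I) follows from a one-line calculation: integrating the PDE in $x$ over $[0,L]$, the periodic boundary conditions annihilate $\int_0^L \partial_x[\mathcal{M}(h)(1+p_x)]\,dx$, leaving $\frac{d}{dt}\int_0^L(h+\tfrac{\alpha}{2}h^2)\,dx = 0$, and integrating in $t$ delivers conservation. The work is in (II), which I plan to prove by multiplying the equation by the natural entropy test function $\Phi(h) := \int_A^h \frac{ds}{\mathcal{M}(s)}$, chosen precisely so that $\Phi'(h) = 1/\mathcal{M}(h)$ and $G'(h) = (1+\alpha h)\Phi(h)$. Integrating the product over $[0,L]$, the time-derivative term becomes $\int \Phi(h)(1+\alpha h)h_t\,dx = \frac{d}{dt}\int G(h)\,dx$ by the chain rule, and integration by parts on the flux term, combined with the cancellation $\Phi'(h)\mathcal{M}(h)=1$, collapses it to $\int h_x(1+p_x)\,dx = \int h_x p_x\,dx$ since $\int h_x\,dx = 0$ by periodicity.

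The heart of the argument is then the analysis of $\int h_x p_x\,dx$ with $p_x = h_{xxx} - \mathcal{Z}_+'(h)h_x - \mathcal{Z}_-'(h)h_x$. Two integrations by parts (boundary terms vanish by periodicity) yield $\int h_x h_{xxx}\,dx = -\int h_{xx}^2\,dx$ and, writing $\mathcal{Z}_-'(h)h_x = \partial_x\mathcal{Z}_-(h)$, that $-\int \mathcal{Z}_-'(h)h_x^2\,dx = \int \mathcal{Z}_-(h)h_{xx}\,dx$; the $\mathcal{Z}_+$ contribution $-\int \mathcal{Z}_+'(h)h_x^2\,dx$ is non-positive by hypothesis and is discarded. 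The destabilizing cross-term is tamed by Young's inequality $ab \leq a^2 + b^2/4$ with $a=h_{xx}$ and $b=\mathcal{Z}_-(h)$, which exactly matches the unit coefficient of the stabilizing $-\int h_{xx}^2\,dx$ term and produces $\frac{d}{dt}\int G(h)\,dx \leq \int (\mathcal{Z}_-(h)/2)^2\,dx$. Integrating in $t$ over $[0,T]$ yields (II).

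The only delicate step is this last Young balance: any other weighting would either leave an uncontrolled $h_{xx}^2$ residue or break the precise factor $(\mathcal{Z}_-/2)^2$ recorded in the statement, so the constants are forced by the single $-\int h_{xx}^2\,dx$ produced by the surface-tension term. Implicitly I also use that $h>0$ along the solution so that $\Phi(h)$ and $G(h)$ are well-defined with $\mathcal{M}(h)>0$, and that the assumed $C^4$ regularity legitimizes the repeated integrations by parts. Because $A$ enters $\Phi$ only as a constant of integration, the identity $\frac{d}{dt}\int G\,dx = \int h_x p_x\,dx$ is independent of the choice of $A$, in line with the remark preceding the proposition.
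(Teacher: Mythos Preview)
Your proposal is correct and follows essentially the same route as the paper's proof: multiply by the entropy test function $\Phi(h)=\int_A^h \mathcal{M}(s)^{-1}\,ds$ so that the time term becomes $\tfrac{d}{dt}\int G(h)$ and the flux term collapses to $\int h_x(1+p_x)$, then split $\mathcal{Z}=\mathcal{Z}_++\mathcal{Z}_-$, drop the sign-definite $\mathcal{Z}_+$ contribution, and balance $-\int h_{xx}^2 + \int h_{xx}\mathcal{Z}_-(h)$. The only cosmetic difference is that the paper phrases the last step as completing the square $-(h_{xx}-\mathcal{Z}_-/2)^2+(\mathcal{Z}_-/2)^2$ rather than invoking Young's inequality, which is of course the same computation.
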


\begin{proof}
The conservation of mass (I) is achieved by integrating the problem $(P)$ on ${L_{T}}${:} 
\begin{align*}
    &\int_{L_{T}} \frac{\partial }{\partial t} \left(h+ \frac{\alpha}{2}h^2\right) dx dt   = - \int_{L_{T}} \frac{\partial}{\partial x}\left[\mathcal{M}(h)\left(1+ \frac{\partial p}{\partial x} \right)\right] dx dt \\
    &\implies \int_0^L  \left(h(x,T)+ \frac{\alpha}{2}h^2(x,T)\right) dx - \int_0^L  \left(h(x,0)+ \frac{\alpha}{2}h^2(x,0)\right) dx = 0.
\end{align*}
Note that the periodic boundary condition removes the complex expression surrounded by $\frac{\partial}{\partial x}\left[... \right]$ on the {right}-hand side of the equality in the first line.

The entropy estimate (II) is achieved by directly calculating the time derivative of $G(h)${:}
\begin{align*}
    \frac{d}{dt}\int_0^L G(h) dx &= \int_0^L G'(h) h_t dx \\
    &= \int_0^L \left\{(1+\alpha h) h_t \int_A^{h} \frac{1}{\mathcal{M}(s)} ds\right\} dx\\
    &=   -\int_0^L\left\{\frac{\partial}{\partial x}\left[\mathcal{M}(h)\left(1+ \frac{\partial p}{\partial x} \right)\right] \int_A^{h} \frac{1}{\mathcal{M}(s)} ds\right\} dx\\
    &=   \int_0^Lh_x\left(1+ \frac{\partial p}{\partial x} \right) dx.
\end{align*}
The equalities are justified by the integration by parts. Note that the periodic boundary plays a crucial role in simplifying expressions on the boundary. We use the definition $p = h_{xx}-\mathcal{Z}(h) =h_{xx} -\mathcal{Z}_{+}(h) -\mathcal{Z}_{-}(h) $ to continue our calculation{:}
\begin{align*}
     \frac{d}{dt}\int_0^L G(h) dx &= \int_0^L h_x dx + \int_0^L h_{x}\frac{\partial }{\partial x}\left(h_{xx}-\mathcal{Z}(h)\right) dx\\
    & = -\int_0^L h^2_{xx}+\int_0^L h_{xx}\mathcal{Z}_{-}(h) dx -\int_0^L h^2_x\mathcal{Z'}_{+}(h)  dx\\
    & = - \int_0^L \left(h_{xx}- \frac{Z_{-}(h)}{2}\right)^2 dx + \int_0^L\left(\frac{Z_{-}(h)}{2}\right)^2 dx - \int_0^L h^2_x\mathcal{Z'}_{+}(h)  dx\\
    & \leq - \int_0^L \left(h_{xx}- \frac{Z_{-}(h)}{2}\right)^2 dx + \int_0^L\left(\frac{Z_{-}(h)}{2}\right)^2 dx.
\end{align*}

Again, the periodic boundary is crucial in eliminating $\int_0^L h_x dx$ in the first line. We simplify the expression by completing the square on the third line. We obtain the inequality in the last line because $\mathcal{Z'}_{+}(h) \geq 0$. Integrating over time gives us
\begin{equation*}
     \int_0^L G(h(x,T))\, dx+ \int_{L_{T}} \left(h_{xx}- \frac{\mathcal{Z}_{-}(h)}{2}\right)^2dx dt\leq \int_0^L G(h(x,0))\, dx +\int_{L_{T}}\left(\frac{\mathcal{Z}_{-}(h)}{2}\right)^2dx dt.
\end{equation*}
Finally, one can drop the second term on the left side of the inequality since it is nonnegative.
\end{proof}

The above properties allow us to create a positivity-preserving numerical method due to the entropy estimate. Lubrication-type equations are well-known to satisfy entropy-dissipating properties. Bernis et al. recognized the significance of the entropy dissipation property in third-order or higher degenerate parabolic equations and used it to prove the nonnegativity of weak solutions with sufficiently high degeneracy in one space dimension \cite{bernis1990higher}. They also proved that the solution is unique and strictly positive if the mobility order $n \geq 4$. Following their work, several articles regarding lubrication-type equations discussed the importance of entropy estimates in numerical and analytical contexts {\cite{grun2001simulation, bertozzi1994singularities, barrett1998finite, zhornitskaya1999positivity,grun2000nonnegativity,bertozzi1994lubrication,
bertozzi1996lubrication, nonnegativeBeretta, doi:10.1137/S0036141096306170}}. These ideas have largely been lacking in the fiber coating problem, except for the entropy analysis done by Ji et al. \cite{ji2021travelling}, which proves the existence of a {generalized} nonnegative weak solution of a fiber-coating model with fully nonlinear curvature terms {on a periodic domain}. In this paper, we use these ideas to develop a positivity-preserving numerical {method}.

\section{Positivity-preserving Finite difference method}
\label{section: PPS formulation}
In this section, we present a continuous time and discrete in space positivity-preserving finite difference method, the Bounded Entropy Method (BEM), and compare it to the current state of the art method General Method (GM) used in fiber coating models \cite{ji2019dynamics}. Our method is second-order accurate in space while preserving the positivity of a numerical solution at each time. Our method is motivated by prior work by Zhornitskaya {\& Bertozzi} \cite{zhornitskaya1999positivity} and {Gr{\"u}n \& Rumpf} \cite{grun2001simulation} for a simple lubrication{-type} model without the geometry and physics of fiber coating. Before introducing our method, we define the following notation.

\begin{notation}
Suppose we divide our domain $[0,L]$ into $N$ equally spaced grids of size $\Delta x = L/N$. Let $u_{i}(t)$ be a solution of a numerical method that is continuous in time and discrete in space at time $t$ and on grid $i$. Define the forward difference in space and the backward difference in space as
\begin{equation*}
 u_{i,x} = \frac{u_{i+1}(t) - u_{i}(t)}{\Delta x}, \quad  u_{i,\bar{x}} = \frac{u_{i}(t) - u_{i-1}(t)}{\Delta x}. 
    \end{equation*}
    Respectively, higher-order differences in space can be defined as
    \begin{equation*}
     u_{i,\bar{x}x} = \frac{u_{i+1,\bar{x}} - u_{i, \bar{x}}}{\Delta x} , \quad u_{i,\bar{x}x\bar{x}} = \frac{u_{i, \bar{x}x} - u_{i-1, \bar{x}x}}{\Delta x}.
     \end{equation*}

\end{notation}
As we highlight the importance of the entropy $G(h)$ in designing a positivity-preserving method in {S}ection~\ref{section: PDE}, the discretized mobility $\mathcal{M}(h)$ is the key factor that determines the qualitative behavior of the solutions near zero. We define the discrete mobility function $m(s_1,s_2)$ according to Definition~\ref{mobility discretization}.

\begin{definition}
[Discretization of Mobility]
\label{mobility discretization}
The mobility term $\mathcal{M}(s)$ in the problem $(P)$ is discretized to satisfy the following criteria \cite{zhornitskaya1999positivity}{:}
\begin{enumerate}[label=(\alph*)]
\item $m(s,s) = \mathcal{M}(s)$,
\item $m(s_1,s_2) = m(s_2,s_1)$,
\item $m(s_1,s_2) \in C^{4}((0,\infty)\times (0,\infty)) \cap C([0,\infty]\times[0,\infty])$,
\item $\forall \delta >0,$ there exists $\gamma >0 $ such that $s_1,s_2 >\delta \implies m(s_1,s_2) \geq \gamma > 0$.
\end{enumerate}    
\end{definition}The above definition of $m(s_1,s_2)$ is symmetric and continuously differentiable everywhere except possibly at $0$. Condition (d) allows the $m(s_1,s_2)$ to be degenerate if one of the arguments $h \to 0$ but guarantees positivity if both of the arguments are greater than 0.
Our positivity-preserving finite difference method, the Bounded Entropy Method (BEM), presented below, satisfies Definition~\ref{mobility discretization}.

\begin{bemthod}[BEM]
\label{pps}
 The finite difference discretization of the problem $(P)$ with continuous time is written by the following equations{:}
\begin{equation} \label{BEM}
\begin{split}
 & (1+\alpha u_{i}) \frac{d u_i}{dt}+  [m(u_{i-1},u_{i})(1+p_{i,\bar{x}})]_{x} = 0, \quad p _{i} = u_{i, \bar{x}x} - \mathcal{Z}_{+}(u_{i})- \mathcal{Z}_{-}(u_{i}),\\
  & u_i(0) = u_0(i\Delta x), \, i = 0,1,2 \cdots N,\\
 & m(s_1,s_2) = \begin{cases}
           \mathcal{M}(s_1)  & \text{if}\,\, s_1 = s_2,\\
           (s_2-s_1)/\int_{s_1}^{s_2}\frac{1}{\mathcal{M}(s)}ds & \text{if}\,\, s_1 \ne s_2.
       \end{cases}
\end{split}
\end{equation}
\end{bemthod}

In {S}ection~\ref{section:positivity proof discrete solutions}, we show that the above discretization of $\mathcal{M}(h)$ in BEM~\eqref{pps} guarantees a discrete equivalent of the conservation of mass (I) and the entropy estimate (II). We also write the numerical method of Ji et al. \cite{ji2019dynamics} as the following, which we refer to as Generic Method (GM).

\begin{genthod}[GM]
The finite difference discretization of the problem $(P)$ with continuous time is written by the following equations{:}
\begin{equation}
\label{GM}
\begin{split}
    &(1+\alpha u_{i}) \frac{d u_i}{dt}+  [m(u_{i-1},u_{i})(1+p_{i,\bar{x}})]_{x} = 0, \quad p _{i} = u_{i, \bar{x}x} - \mathcal{Z}_{+}(u_{i})- \mathcal{Z}_{-}(u_{i}),\\
    & u_i(0) = u_0(i\Delta x), \, i = 0,1,2 \cdots N,
\end{split}
\end{equation}
where $m(s_1,s_2)$ satisfies Definition \ref{mobility discretization}.

\end{genthod}
As an example of $m(s_1,s_2)$ used in GM~\eqref{GM}, one can let $m(s_1,s_2) = \mathcal{M}(0.5(s_1+s_2))$ or $m(s_1,s_2) = 0.5(\mathcal{M}(s_1)+\mathcal{M}(s_2))$, where either one estimates the mobility at the midpoint. Note that $m(s_1,s_2)$ in BEM~\eqref{pps} and GM~\eqref{GM} uses center-difference, allowing the numerical method to conserve flux at each time step. Together with second-order consistency, both numerical methods are ``shock capturing,” which is a desirable property to have in conservation law type of equations \cite{leveque1992numerical}. In the following section, we show that BEM~\eqref{pps} satisfies the conservation of mass and entropy estimate, which allows us to prove the positivity of the numerical method. 

\section{Positivity of Numerical solutions}
\label{section:positivity proof discrete solutions}
In the previous section, we claim that $m(s_1,s_2)$ in BEM~\eqref{pps} satisfies a discrete equivalent of the conservation of mass and the entropy estimates discussed in {S}ection~\ref{section: PDE}. In this section, we prove our claim through Proposition~\ref{prop numerical} and explain how such discretizations preserve the positivity of BEM~\eqref{pps} through Theorem~\ref{pps bem}. Our method is inherently more complex than entropy dissipating schemes for traditional lubrication-type equations because of three reasons. First, the time derivative of \eqref{generic fiber} involves the geometry of the cylindrical fiber $\frac{\alpha}{2}h^2$. Second, a nonlinear advection $\frac{\partial }{\partial x}\mathcal{M}(h)$ is incorporated. Lastly, nonlinear pressure $p$ entails { $\mathcal{Z}(h) = \mathcal{Z}_{+}(h)+ \mathcal{Z}_{-}(h)$}. The coupled entropy estimate expression in Proposition 2.1 is consequently more complicated than ``entropy dissipation", which is the case for the conventional lubrication-type equations. The following proposition is a discrete analog of Proposition 2.1.

\begin{proposition}
\label{prop numerical}
    Suppose $u_{i}(t)$ is a solution of the BEM~\eqref{pps} at time $t$ and $i$-th grid in space. Suppose we further assume
\begin{align*}
    & \mathcal{M}(h) = O(h^n),\, \mathcal{M}(h) \geq 0,\\
    & \mathcal{Z}_{+},\, \mathcal{Z}_{-} \in C^2(\mathbb{R}^{+}),\,\,\text{ and }\,\,\mathcal{Z'}_{+}(h) \geq 0,\, \mathcal{Z'}_{-}(h )\leq 0,\\
    & G'(h) = \left(1+\alpha h\right)\int_{A}^{h}\frac{1}{\mathcal{M}(s)} \,ds ,\,\,\text{for some fixed  } A > 0. 
\end{align*}

Then, $u_{i}(t)$  satisfies the following two properties given $T > 0$;
\begin{align*}
&(I) \,\, \sum_{i} \left(u_i(T)+ \frac{\alpha}{2} u_i(T)^2\right) \,\Delta x = \sum_i \left(u_i(0)+ \frac{\alpha}{2} u_i(0)^2\right) \, \Delta x \,\, (\text{Discrete conservation of mass}),\\
&(II) \sum_{i} G(u_{i}(T))\Delta x \leq \sum_{i}G(u_{i}(0))\Delta x + \int_0^T  \sum_{i}\left(\frac{\mathcal{Z}_{-}(u_i({t}))}{2}\right)^2\Delta x d{t} \,\,(\text{Discrete entropy estimate}).
\end{align*}
\end{proposition}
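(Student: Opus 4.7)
My plan is to mirror the proof of Proposition~\ref{prop pde} at the discrete level, with the key new ingredient being the algebraic identity built into the BEM mobility $m(s_1,s_2)$. The workhorse throughout will be discrete summation by parts on a periodic grid, namely $\sum_i a_{i,x} b_i\,\Delta x = -\sum_i a_i\, b_{i,\bar x}\,\Delta x$, and the telescoping identity $\sum_i v_{i,\bar x}\,\Delta x = 0$.

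For part (I), I would simply multiply the evolution equation in BEM~\eqref{BEM} by $\Delta x$ and sum over $i$. The left-hand side is exactly $\frac{d}{dt}\sum_i (u_i + \tfrac{\alpha}{2}u_i^2)\,\Delta x$ by the chain rule, and the right-hand side is a telescoping sum $\sum_i [\cdot]_x\Delta x$ that vanishes by periodicity. Integrating in time from $0$ to $T$ gives the discrete mass conservation statement.

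For part (II), I would multiply the BEM equation at node $i$ by $\int_A^{u_i}\frac{1}{\mathcal{M}(s)}\,ds\,\Delta x$ and sum over $i$. On the time-derivative side, the factor $(1+\alpha u_i)$ combines with $\int_A^{u_i}\mathcal{M}^{-1}$ to give exactly $G'(u_i)\frac{du_i}{dt}$, so the sum becomes $\frac{d}{dt}\sum_i G(u_i)\Delta x$. On the flux side, summation by parts converts $[m(u_{i-1},u_i)(1+p_{i,\bar x})]_x$ paired against $\int_A^{u_i}\mathcal{M}^{-1}$ into a sum involving the backward difference of this integral. Here enters the crucial algebraic identity: by the harmonic-mean definition of $m$ in BEM~\eqref{BEM},
\begin{equation*}
m(u_{i-1},u_i)\,\left(\int_A^{u_i}\frac{1}{\mathcal{M}(s)}\,ds\right)_{\bar x} \;=\; \frac{u_i - u_{i-1}}{\Delta x} \;=\; u_{i,\bar x},
\end{equation*}
which collapses the expression to $\sum_i (1+p_{i,\bar x})u_{i,\bar x}\Delta x$. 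The $1\cdot u_{i,\bar x}$ piece telescopes to zero by periodicity, and a second summation by parts rewrites the remaining term as $-\sum_i p_i u_{i,\bar x x}\Delta x$.

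Substituting $p_i = u_{i,\bar x x} - \mathcal{Z}_+(u_i) - \mathcal{Z}_-(u_i)$ produces three pieces. The $-u_{i,\bar x x}^2$ piece stays as is, the $\mathcal{Z}_-$ piece is combined with the $-u_{i,\bar x x}^2$ piece via a completed-square identity to yield $-\sum_i(u_{i,\bar x x}-\tfrac{\mathcal{Z}_-(u_i)}{2})^2 + \sum_i(\tfrac{\mathcal{Z}_-(u_i)}{2})^2$, and the $\mathcal{Z}_+$ piece requires one more summation by parts giving $-\sum_i u_{i,\bar x}(\mathcal{Z}_+(u_i))_{\bar x}\Delta x$. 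I then invoke the discrete mean value theorem: $(\mathcal{Z}_+(u_i))_{\bar x} = \mathcal{Z}_+'(\xi_i)\,u_{i,\bar x}$ for some intermediate point $\xi_i$, which together with $\mathcal{Z}_+'\geq 0$ shows this term is $\leq 0$. Discarding the two nonpositive quantities and integrating in $t$ delivers~(II).

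The main obstacle I anticipate is verifying the harmonic-mean identity cleanly and confirming that the discrete $\mathcal{Z}_+$ contribution has the right sign; everything else is bookkeeping with summation by parts. This is where the specific choice of $m$ in BEM (as opposed to the generic $m$ of GM) is essential: the arithmetic-mean or midpoint choices used in GM do not produce this cancellation, which explains why the entropy estimate, and hence positivity, is proved only for BEM.
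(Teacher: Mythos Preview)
Your proposal is correct and follows essentially the same route as the paper's proof: discrete summation by parts on a periodic grid, the harmonic-mean identity $m(u_{i-1},u_i)\bigl(\int_A^{u_i}\mathcal{M}^{-1}\bigr)_{\bar x}=u_{i,\bar x}$ specific to BEM, completing the square for the $\mathcal{Z}_-$ contribution, and the sign argument $-\sum_i u_{i,\bar x}[\mathcal{Z}_+(u_i)]_{\bar x}\Delta x\le 0$ via $\mathcal{Z}_+'\ge 0$. The only cosmetic difference is that the paper expands $p_{i,\bar x}$ before summing by parts whereas you first pass to $-\sum_i p_i\,u_{i,\bar x x}\Delta x$ and then expand $p_i$; the resulting terms are identical.
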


\begin{proof}
The proof of the statements is very similar to the proof of Proposition 2.1. The only difference is that we multiply by $\Delta x$ and sum over $i =  0,1,2...\,N$ instead of integrating over space. Discrete conservation of mass (I) is achieved by integrating the first line of \eqref{BEM} by time and summing over $i =  0,1,2...\,N${:} 
\begin{align*}
    & \int_0^T \sum_i   (1+\alpha u_{i})\frac{d u_i}{dt} \Delta x{dt} = -\int_0^T \sum_i [m(u_{i-1},u_{i})(1+p_{i,\bar{x}})]_{x} \Delta x {dt} \\
   &\implies \sum_i  \left(u_i(T)+ \frac{\alpha}{2}u_i(T)^2\right) \Delta x -  \sum_i  \left(u_i(0)+ \frac{\alpha}{2}u_i(0)^2\right) \Delta x = 0.
\end{align*}
As we saw in the continuous case, the periodic boundary condition removes the expression surrounded by $[ ...]_{x}$.

The discrete entropy estimate (II) is achieved by direct calculation.
\begin{align*}
    \frac{d}{dt}\sum_{i} G(u_{i}) \Delta x &= \sum_{i} G'(u_{i}) \frac{d u_{i}}{dt} \Delta x \\
    &= -\sum_{i} \int_A^{u_i}\frac{1}{\mathcal{M}(s)}ds[{m(u_{i-1},u_{i})}(1+p_{i,\bar{x}})]_{x} \Delta x\\
    &= \sum_{i}\frac{1}{\Delta x}\left(\int_{u_{i-1}}^{u_i} \frac{1}{\mathcal{M}(s)}ds\right){m(u_{i-1},u_{i})}(1+p_{i,\bar{x}}) \Delta x\\
    & = \sum_{i}u_{i,\bar{x}}(1+p_{i,\bar{x}}) \Delta x \,\,\, \\
    & = \sum_{i} \left\{ -(u_{i,\bar{x}x})^2 - u_{i,\bar{x}} [\mathcal{Z}_{+}(u_{i})]_{\bar{x}}+ u_{i,\bar{x}x} \mathcal{Z}_{-}(u_{i}) \right\} \Delta x  \\
    & \leq  - \sum_{i} \left(u_{i,\bar{x}x}- \frac{\mathcal{Z}_{-}(u_i)}{2}\right)^2\Delta x +\sum_{i}\left(\frac{\mathcal{Z}_{-}(u_i)}{2}\right)^2\Delta x. 
\end{align*}

Until the 4th line, the equalities are justified by integration by parts. Note that the periodic boundary plays a crucial role in simplifying expressions on the boundary and eliminating $\sum_{i} u_{i,\bar{x}}\Delta x$ in the 4th line. We obtain the inequality in the last line after completing the square and using the fact that $\mathcal{Z'}_{+} \geq 0$. From the inequality, one integrates over time from $0$ to $T$.
\begin{align*}
    &\sum_{i} G(u_{i}(T))\Delta x + \int_0^T \sum_{i}\left(u_{i,\bar{x}x} ({t})- \frac{\mathcal{Z}_{-}(u_i({t}))}{2}\right)^2\Delta x d{t} \leq \sum_{i}G(u_{i}(0))\Delta x + \int_0^{{T}}  \sum_{i}\left(\frac{\mathcal{Z}_{-}(u_i({t}))}{2}\right)^2\Delta x d{t}
    \end{align*}
Finally, one can drop the second term on the left side since it is nonnegative and the desired entropy estimate is achieved. 
\end{proof}

We have two versions of theorems on the positivity: (a) a priori bound - depending on $\Delta x$ and (b) a posteriori bound assuming a uniform Lipschitz condition on the numerical solution. We note that the solution is observed to have a uniform Lipschitz bound in all of our numerical simulations. Thus, the uniform Lipschitz assumption is observed numerically and thus can be used in an {\em a posteriori} argument. We leave proving the smoothness of PDE, such as establishing a uniform Lipschitz bound, as future work.

\begin{theorem}{(Positivity of BEM)}
\label{pps bem}
Suppose we have the same assumptions as Proposition 4.1. We further assume that $(Z_{-}(s))^2 \leq C_1$ for any $s \geq 0$ and the initial data $u_i(0)> 0$. Then, the solution of BEM~\eqref{BEM} at time $T >0 $, $u_i(T)$, satisfies the following conditions; 

 \begin{enumerate}[label=(\alph*)]
     \item if $n \geq 2$, there exists $\delta$ such that $u_i(T) \geq \delta (\Delta x) > 0$ for all $i$,
     \item {if $n > 2$ and $u_i(t)$ is uniformly Lipchitz on $[0,T]$, there is a posteriori lower bound $\delta$ independent of $\Delta x$ such that $u_i(T) \geq \delta > 0$. i.e. One assumes $|u_{i}(t) -u_{j}(t)| \leq C_L|(i-j)\Delta x|$ for some $C_L>0$ and for $\forall i,j$, $\forall 0\leq t\leq T$.}  
 \end{enumerate}
 
\end{theorem}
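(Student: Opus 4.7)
The plan is to derive both lower bounds from the discrete entropy estimate (II) of Proposition 4.1. I would first observe that the hypothesis $(\mathcal Z_-(s))^2\le C_1$ turns the right-hand side of (II) into a constant uniform in $\Delta x$: since $u_i(0)>0$ gives $\sum_i G(u_i(0))\Delta x<\infty$, one obtains
\begin{equation*}
\sum_i G(u_i(T))\,\Delta x \;\le\; \sum_i G(u_i(0))\,\Delta x + \frac{C_1\,L\,T}{4} \;=:\; M,
\end{equation*}
with $M$ independent of $\Delta x$. A direct computation from $G'(h)=(1+\alpha h)\int_A^h \mathrm{d}s/\mathcal M(s)$, combined with $\mathcal M(s)=O(s^n)$, then shows that $G$ is strictly decreasing on $(0,A)$ and blows up at the origin like $-\log h$ when $n=2$ and like $h^{-(n-2)}$ when $n>2$.

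For part (a) I would simply isolate a single term of the entropy sum: for any grid index $i_0$,
\begin{equation*}
G(u_{i_0}(T)) \;\le\; \frac{M}{\Delta x},
\end{equation*}
and inverting the asymptotics of $G$ yields $u_{i_0}(T)\ge c\,e^{-M/(c_2\Delta x)}$ when $n=2$ and $u_{i_0}(T)\ge (c_n\Delta x/M)^{1/(n-2)}$ when $n>2$. Both bounds are strictly positive but shrink as $\Delta x\to 0$, which is precisely the meaning of the a priori, $\Delta x$-dependent constant $\delta(\Delta x)$.

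For part (b) I would argue by contradiction using the uniform Lipschitz hypothesis. Let $\epsilon := \min_i u_i(T)$ be attained at some index $i_0$. The Lipschitz bound gives $u_j(T)\le \epsilon+C_L|j-i_0|\Delta x$ for every $j$, so monotonicity of $G$ near $0$ yields
\begin{equation*}
M \;\ge\; \sum_j G(u_j(T))\,\Delta x \;\ge\; \sum_j G\!\left(\epsilon+C_L|j-i_0|\Delta x\right)\Delta x.
\end{equation*}
The right-hand side is a Riemann sum for $(2/C_L)\int_{\epsilon}^{\epsilon+C_L L/2} G(w)\,\mathrm{d}w$, and the singular behavior of $G$ for $n>2$ forces this quantity to grow unboundedly as $\epsilon\to 0^+$, contradicting the uniform constant $M$ and producing a $\Delta x$-independent lower bound $\delta>0$.

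The main obstacle is the quantitative sharpness of the spreading argument in (b): one has to control the Riemann-sum comparison uniformly in $\Delta x$, match the blow-up rate of $G$ against the Lipschitz half-width $\epsilon/C_L$ around $i_0$, and carefully treat the borderline exponents of $n$ where the singular integral only barely diverges. If a bare Lipschitz estimate does not suffice, one can augment it with the discrete $H^2$-type control of $u_{i,\bar x x}-\mathcal Z_-(u_i)/2$ furnished by the second term on the left of (II); but packaging all constants so that the final bound is genuinely independent of $\Delta x$ (and of the auxiliary parameter $A$ in $G'$) is the delicate step.
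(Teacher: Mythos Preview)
Your proposal follows essentially the same route as the paper: bound the discrete entropy sum uniformly via the hypothesis on $\mathcal Z_-$, then for (a) exploit the blow-up of $G$ at a single grid point, and for (b) combine the Lipschitz bound $u_j\le \epsilon+C_L|j-i_0|\Delta x$ with a Riemann-sum/integral comparison. The only cosmetic difference is that the paper first replaces $G(u)$ by its leading singular part $u^{2-n}$ before summing, which makes the integral comparison explicit and disposes of your uniformity worry by the elementary observation that the sum of a decreasing function dominates the corresponding integral; otherwise the arguments coincide.
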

\begin{proof}
Notice that we assume that $\mathcal{M}(h) =  O(h^n)$ and consider cases where $n \geq 2$. Thus, for the sake of simplicity, we take $\mathcal{M}(h) = h^n$ throughout the proof. More general cases can be proved similarly.  Let us first prove statement (a). The given assumptions allow us to use the discrete entropy estimate $(II)$ from Proposition \ref{prop numerical}. First, we claim that $\sum_{i} G(u_{i}(T)) \Delta x \leq C$ for a fixed constant $C$ as any $u_i(T) \to 0$. Since we take $\mathcal{M}(h) = h^n$, we can explicitly calculate 
\begin{equation*}
G(h) =
\begin{cases}   
& -\ln h + O(h)+ O(1) \quad \text{if  } n = 2, \\
%& -\ln h +\left(\frac{1}{A}-\alpha\right)h+ \frac{\alpha}{2A} h^2+O(1) \quad \text{if } n = 2, \\
& \frac{1}{(n-1)(n-2)}h^{-(n-2)}  + O(h^{3-n}) + O(1) \quad \text{if  } 2 < n < 3,\\
& \frac{1}{2h} -\frac{\alpha}{2} \ln h + O(h) + O(1) \quad  \text{if  } n = 3,\\
& \frac{1}{(n-1)(n-2)}h^{-(n-2)} + \frac{\alpha}{(n-1)(n-3)} h^{-(n-3)} + O(h) + O(1)\quad  \text{if  } n > 3.
\end{cases}
\end{equation*}

Here, the choice of $A$ only affects the coefficients of the higher-order terms but not the leading-order term. Each $G(u_i(0))$ is also well defined because we have fixed initial data $u_i(0) > 0$. This leads us to conclude
\begin{equation*}
    \sum_{i}G(u_{i}(0))\Delta x \leq  C_0,\,\, \text{for some constant} \,\, C_0.
\end{equation*}
We also assume {$(Z_{-}(s))^2 \leq C_1$ for any $s \geq 0$} so 
\begin{equation*}
    \int_0^T  \sum_{i}\left(\frac{\mathcal{Z}_{-}(u_i({t}))}{2}\right)^2\Delta x {dt} \leq C_2T, \,\,\text{for some constant}\,\, C_2,\,\, \text{as any } u_i(T) \to 0.
\end{equation*}
Hence, we get
\begin{align*}
    &\sum_{i} G(u_{i}(T))\Delta x\leq \sum_{i}G(u_{i}(0))\Delta x + \int_0^T  \sum_{i}\left(\frac{\mathcal{Z}_{-}(u_i({t}))}{2}\right)^2\Delta x d{t} \leq C_0 + C_2T \leq C.
\end{align*} 

Next, we show that $\delta(T) = \min_i u_i(T) \geq 0$ using the boundedness of $\sum_{i} G(u_{i}(T))\Delta x$. Notice that each leading-order term of $G(\delta)$ is positive as $\delta \to 0$, up to constant differences.
\begin{equation*}
G(\delta) =
\begin{cases}   
& -\ln \delta + O(\delta)+ O(1) \quad \text{if  } n = 2, \\
& \frac{1}{(n-1)(n-2)}\delta^{-(n-2)}  + O(\delta^{3-n}) + O(1) \quad \text{if  } 2 < n < 3,\\
& \frac{1}{2\delta} -\frac{\alpha}{2} \ln \delta + O(\delta) + O(1) \quad  \text{if  } n = 3,\\
& \frac{1}{(n-1)(n-2)}\delta^{-(n-2)} + \frac{\alpha}{(n-1)(n-3)} \delta^{-(n-3)} + O(\delta) + O(1)\quad  \text{if  } n > 3,
\end{cases}
\end{equation*}
Thus, $\delta \to 0$ implies $G(\delta) \to +\infty$, which contradicts $\sum_{i}G(u_i(T))\Delta x \leq C$. Hence, we achieve $\min_i u_i(T) = \delta > 0$.

To prove (b), we use $\sum_{i} G(u_{i}(T))\Delta x \leq C $  as well. From part (a), we have nonnegativity of $u_i(T)$ so 
\begin{align*}
 G(u_i(T)) = \int_B^{u_i} (1+\alpha v) \int_A^{v} \frac{1}{\mathcal{M}(s)}dsdv + O(1)\geq  \int_B^{u_i} \int_A^{v} \frac{1}{\mathcal{M}(s)}dsdv + O(1), \,\, \text{for some} \,\, B > 0. 
\end{align*}
Therefore, 
\begin{align*}
     C \geq \sum_i G(u_i(T))\Delta x \geq \sum_i\int_B^{u_i} \int_A^{v} \frac{1}{\mathcal{M}(s)}dsdv \Delta x + O(1) \geq \sum_{i} \int_B^{u_i} \int_A^{v} \frac{1}{s^n}dsdv \Delta x + O(1)  =  \sum_i u^{2-n}\Delta x+O(1). 
\end{align*}
Suppose $\delta(T) = \min_i u_i(T)$ occurs at $i^{*}$. Due to the uniform Lipschitzness, $u_i \leq \delta + C_L |(i^{*}-i)\Delta x|,\, \forall i$ {so} 
\begin{align*}
    \Tilde{C} \geq  & \sum_i\frac{1}{u^{n-2}_i}\Delta x \geq \sum_i\frac{\Delta x}{(\delta + C_L|(i-i^*)\Delta x|)^{n-2}} \geq \sum_i\frac{\Delta x}{(\delta + C_L(i\Delta x))^{n-2}} \\
    & \geq \int_0^L \frac{dx}{(\delta + C_L x)^{n-2}} \geq \frac{1}{C_L \delta^{n-1}}\int_0^{LC_L\slash\delta}\frac{ds}{(1+s)^{n-2}}. 
\end{align*}
If $ \frac{L C_L}{\delta} \leq 1 \implies \delta \geq LC_L$ so we have lower bound for $\delta$ independent of $\Delta x$. In the case when $\frac{L C_L}{\delta} \geq 1$,
\begin{align*}
    &\Tilde{C}  \geq \frac{1}{C_L \delta^{n-1}}\int_0^{1} \frac{ds}{(1+s)^{{n-2}}} = \frac{C'}{\delta^{n-1}}\\
    &\implies \delta \geq \left(\frac{C'}{\Tilde{C} }\right)^{1\slash n-1}. 
\end{align*}
\end{proof}

\begin{corollary}
Continuous time, discrete space, numerical solutions of the Craster-Matar model (CM) \cite{craster2006viscous} and the Film Stabilization Model (FSM) \cite{ji2019dynamics} are positive at any time $T>0$ and grid point $i$ if we use the BEM~\eqref{pps}. 

\end{corollary}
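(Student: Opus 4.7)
The plan is to verify that both models fit into the abstract framework of Theorem \ref{pps bem}, and then invoke it directly. What needs to be checked for each model is: (i) a valid convex-concave splitting $\mathcal{Z}=\mathcal{Z}_{+}+\mathcal{Z}_{-}$ with $\mathcal{Z}'_{+}\geq 0$ and $\mathcal{Z}'_{-}\leq 0$, both components being $C^{2}(\mathbb{R}^{+})$; (ii) a uniform bound $(\mathcal{Z}_{-}(s))^{2}\leq C_{1}$ for $s\geq 0$; and (iii) mobility $\mathcal{M}(h)=O(h^{n})$ with $n\geq 2$ (or $n>2$ for the a posteriori bound). Given a positive initial datum $u_{i}(0)>0$, these conditions suffice.

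For the Craster-Matar model, I would take $\mathcal{Z}(h)=\frac{\alpha}{\eta(1+\alpha h)}$ and observe that $\mathcal{Z}'(h)=-\frac{\alpha^{2}}{\eta(1+\alpha h)^{2}}\leq 0$ on $\mathbb{R}^{+}$. Thus the natural splitting is $\mathcal{Z}_{+}\equiv 0$, $\mathcal{Z}_{-}(h)=\mathcal{Z}(h)$. Both terms are trivially $C^{2}$ on $\mathbb{R}^{+}$, and the bound $0\leq \mathcal{Z}_{-}(h)\leq \alpha/\eta$ for $h\geq 0$ gives $(\mathcal{Z}_{-}(h))^{2}\leq (\alpha/\eta)^{2}$, which is the required $C_{1}$.

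For the Film Stabilization Model, $\mathcal{Z}(h)=\frac{\alpha}{\eta(1+\alpha h)}-\frac{A_{H}}{h^{3}}$. I would split as $\mathcal{Z}_{-}(h)=\frac{\alpha}{\eta(1+\alpha h)}$ (same as in CM, so the monotonicity and uniform bound still hold) and $\mathcal{Z}_{+}(h)=-\frac{A_{H}}{h^{3}}$. With $A_{H}>0$ (the physical sign from the van der Waals stabilization term), one has $\mathcal{Z}'_{+}(h)=\frac{3A_{H}}{h^{4}}\geq 0$, as required, and $\mathcal{Z}_{+}\in C^{2}(\mathbb{R}^{+})$. Note that the singularity of $\mathcal{Z}_{+}$ at $h=0$ is not a problem: the hypotheses only require $C^{2}$ on $\mathbb{R}^{+}$, and the proof of the theorem only uses the pointwise sign of $\mathcal{Z}'_{+}$ together with boundedness of $\mathcal{Z}_{-}^{2}$.

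With the splittings fixed, the mobility function in both models is of the form $\mathcal{M}(h)=h^{3}$ (no-slip) or more generally a power with $n\geq 2$ satisfying Definition \ref{mobility discretization}, and we assume $u_{i}(0)>0$. All hypotheses of Theorem \ref{pps bem}(a) are therefore met, yielding $u_{i}(T)\geq \delta(\Delta x)>0$ for every grid point $i$ and every $T>0$. If one additionally assumes a uniform Lipschitz bound on the numerical solution (as observed in simulation) and $n>2$, part (b) gives the stronger, $\Delta x$-independent bound. The only subtle point to emphasize is the correct assignment of the van der Waals term to $\mathcal{Z}_{+}$ rather than $\mathcal{Z}_{-}$: although it blows up as $h\to 0$, its \emph{monotonicity} places it on the convex side of the splitting, and the entropy argument uses this sign structure rather than boundedness of $\mathcal{Z}_{+}$.
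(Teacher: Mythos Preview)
Your treatment of the convex-concave splitting and the bound on $\mathcal{Z}_{-}$ is correct and matches the paper. However, there is a genuine gap concerning the mobility. You write that ``the mobility function in both models is of the form $\mathcal{M}(h)=h^{3}$ (no-slip) or more generally a power with $n\geq 2$.'' This is not the case for the CM and FSM models: both use the cylindrical-geometry mobility
\[
\mathcal{M}(h)=\frac{h^{3}}{3}\frac{\phi(\alpha h)}{\phi(\alpha)}+\frac{h^{2}(\alpha h+2)^{2}\lambda}{4\phi(\alpha)},\qquad
\phi(X)=\frac{3}{16X^{3}}\bigl[(1+X)^{4}(4\ln(1+X)-3)+4(1+X)^{2}-1\bigr],
\]
which is \emph{not} a pure power of $h$. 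Since $\phi(X)\sim C X^{-3}$ times a quantity that vanishes as $X\to 0$, the behaviour of $\mathcal{M}(h)$ near $h=0$ is not obvious; in particular one cannot simply read off $n=3$. The paper's proof is precisely this computation: substituting $y=\alpha h$, expanding $(1+y)^{4}(4\ln(1+y)-3)+4(1+y)^{2}-1$ and cancelling the constant and linear terms to conclude $\mathcal{M}(h)=O(h^{2})$ as $h\to 0$. This is the main technical content of the corollary's proof, and it is exactly what is needed to invoke Theorem~\ref{pps bem}(a) with $n=2$. Without it, the hypothesis $\mathcal{M}(h)=O(h^{n})$, $n\ge 2$, has not been verified for the models in question.
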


\begin{proof}
For both cases, the same mobility function $\mathcal{M}(h)$ is used, but different $\mathcal{Z}(h)$ is used{:} 
\begin{align*}
&\mathcal{M}(h) = \frac{h^3}{3}\frac{\phi(\alpha h)}{\phi(\alpha)} +\frac{h^2(\alpha h + 2)^2 \lambda}{4\phi(\alpha)},\\
&\phi(x) = \frac{3}{16x^3} \left[(1+x)^4(4\ln(1+x)-3) + 4(1+x)^2 -1\right],\\
&\mathcal{Z}_{CM}(h) = \mathcal{Z}_{CM-}(h)=\frac{\alpha}{\eta(1+\alpha h)},\\
&\mathcal{Z}_{FSM}(h) = \mathcal{Z}_{FSM+}+\mathcal{Z}_{FSM-} = -\frac{A_{H}}{h^3}+ \frac{\alpha}{\eta(1+\alpha h)},
\end{align*}
{for $\alpha,\eta,A_H >0$.} We prove that the assumptions for Theorem~\ref{pps bem} are satisfied by showing that $\mathcal{M}(h)= O(h^2)$ as $h \to 0$ and $(\mathcal{Z}_{-}(s))^2 \leq \left(\frac{\alpha}{\eta}\right)^2$. To simplify the calculation, let $y =\alpha h$. Then, {we achieve}
\begin{align*}
{\mathcal{M}(h)} &= \frac{1}{16\alpha^3\phi(\alpha)}\left[(y+1)^4 (4\ln(y+1)-3) + 4(y +1)^2 -1+4\lambda\alpha y^2(y+2)^2\right]\\
&  =  \frac{1}{C}\left[A_4 y^4 + A_3 y^3 + A_2 y^2 + A_1 y + A_0\right], 
\end{align*}
where
\begin{align*}
    & A_4 = 4\alpha\lambda + 4\ln(y+1)-3, \,\,A_3 = 16\alpha\lambda + 16\ln(y+1) -12,\,\, A_2 = 16\alpha\lambda + 24\ln(y+1)-14,\\
    & A_1 = 16\ln(y+1)-4,\,\, A_0= 4\ln(y+1).
\end{align*}

As $y \to 0$, $\ln(y+1) = O(y)$. Thus, 
\begin{align*}
{\mathcal{M}(h)} = O(y^2) + \frac{1}{C}[A_1 y + A_0] = O(y^2)+16y^2 -4y + 4y = O(y^2) = {O(h^2)}.
\end{align*}
Finally, for any $s \geq 0$,
\begin{align*}
     \mathcal{Z}_{-}(s) = \frac{\alpha}{\eta(1+\alpha s)}
     \leq \frac{\alpha}{\eta}.
\end{align*}
To finish the proof, we apply Theorem~\ref{pps bem} and see that the numerical solutions of both CM and FSM are positive.
\end{proof}

\begin{theorem}[Consistency]
GM~\eqref{GM} and BEM~\eqref{pps} are second-order consistent in space. That is, given a smooth solution $u(x,t)$ of the problem $(P)$, a local truncation error $\tau_i(t)$ is 
$O(\Delta x^2)$, {w}here
\begin{equation*}
    \tau_i(t) = (1+\alpha u_i){\frac{du_{i}}{dt}} + [m(u_{i-1},u_{i})(1+p_{i,\bar{x}})]_{x}.
\end{equation*}
\end{theorem}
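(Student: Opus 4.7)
The plan is to Taylor expand every finite-difference operator in $\tau_i(t)$ around the grid point $x_i$ or the staggered point $x_{i-1/2}$ and compare, term by term, with the continuous PDE $(P)$ evaluated at $(x_i,t)$. Because $u(x,t)$ is a smooth solution, all Taylor remainders are bounded uniformly on the grid, and the time derivative $(1+\alpha u_i)\,du_i/dt$ equals $(1+\alpha u)u_t$ at $(x_i,t)$ exactly: no spatial discretization enters that term, so only the flux divergence needs to be analyzed.

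First I would verify that the discrete pressure obeys $p_i=p(x_i,t)+\Delta x^2\,E(x_i,t)+O(\Delta x^4)$ with $E$ smooth, since $u_{i,\bar x x}=u_{xx}(x_i,t)+\tfrac{\Delta x^2}{12}u_{xxxx}(x_i,t)+O(\Delta x^4)$ and $\mathcal Z_\pm(u_i)=\mathcal Z_\pm(u(x_i,t))$ exactly. A backward difference then gives $p_{i,\bar x}=p_x(x_{i-1/2},t)+O(\Delta x^2)$, with the leading correction also smooth as a function of the staggered point.

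The main obstacle is showing that the discrete mobility factor satisfies $m(u_{i-1},u_i)=\mathcal M(u(x_{i-1/2},t))+O(\Delta x^2)$ for both schemes. For GM I would expand $m(s_1,s_2)$ about $(\bar u,\bar u)$ with $\bar u=u(x_{i-1/2},t)$, writing $u_i=\bar u+a$, $u_{i-1}=\bar u-b$ so that $a,b=\tfrac{\Delta x}{2}u_x+O(\Delta x^2)$ and crucially $a-b=O(\Delta x^2)$. Differentiating $m(s,s)=\mathcal M(s)$ along the diagonal together with the symmetry $m(s_1,s_2)=m(s_2,s_1)$ forces $\partial_1 m=\partial_2 m=\tfrac12\mathcal M'(\bar u)$ at $(\bar u,\bar u)$, so the first-order Taylor contributions collapse to $\tfrac12\mathcal M'(\bar u)(a-b)=O(\Delta x^2)$. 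For BEM, I would instead apply the midpoint quadrature rule to $\int_{u_{i-1}}^{u_i}ds/\mathcal M(s)$; since $1/\mathcal M$ is smooth on the range of the positive solution $u$, this yields $m(u_{i-1},u_i)=\mathcal M((u_{i-1}+u_i)/2)+O(\Delta x^2)=\mathcal M(\bar u)+O(\Delta x^2)$, again with a smooth $O(\Delta x^2)$ correction.

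To close, I would combine these expansions: the flux $f_i:=m(u_{i-1},u_i)(1+p_{i,\bar x})$ equals $F(x_{i-1/2},t)+\Delta x^2\,G(x_{i-1/2},t)+O(\Delta x^3)$, where $F=\mathcal M(u)(1+p_x)$ and $G$ is a smooth function, and similarly $f_{i+1}=F(x_{i+1/2},t)+\Delta x^2\,G(x_{i+1/2},t)+O(\Delta x^3)$. The smoothness of $G$ is the key point: it makes $(f_{i+1}-f_i)/\Delta x$ a centered approximation to $F_x(x_i,t)$ with error $O(\Delta x^2)$, because the $\Delta x^2\,G$ contributions differ by $O(\Delta x^3)$ and absorb cleanly after division. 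Subtracting the continuous identity $(1+\alpha u)u_t+F_x=0$ at $(x_i,t)$ then yields $\tau_i(t)=O(\Delta x^2)$, covering both GM and BEM in one stroke.
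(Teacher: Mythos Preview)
Your proposal is correct and follows essentially the same route as the paper: expand the discrete flux at the staggered point $x_{i-1/2}$, use the symmetry of $m$ to kill the $O(\Delta x)$ contribution, obtain $f_i=F(x_{i-1/2})+\Delta x^2\,G(x_{i-1/2})+O(\Delta x^3)$ with $G$ smooth, and then observe that the forward difference of such a flux is a centered approximation to $F_x(x_i)$ with $O(\Delta x^2)$ error. The only notable difference is that the paper treats GM and BEM simultaneously via the abstract properties of Definition~\ref{mobility discretization} (symmetry plus $C^4$ regularity of $m$ off the diagonal), Taylor expanding $m(s+\Delta s,s-\Delta s)$ about the diagonal, whereas you handle BEM separately by invoking midpoint quadrature on $\int_{u_{i-1}}^{u_i}ds/\mathcal M(s)$; both arguments reach the same $\mathcal M(u_{i-1/2})+O(\Delta x^2)$ conclusion, and the paper's unified treatment is slightly more economical while yours is arguably more transparent for the specific BEM mobility.
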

\begin{proof}
 Let us denote $u_{i} = u(i\Delta x,t)$ to simplify the notation. First, note that both GM and BEM have very similar formulations and satisfy Definition \ref{mobility discretization}. Thus, we can use an approach similar to \cite{zhornitskaya1999positivity}. After Taylor expansion,
\begin{align*}
    m(s_1,s_2) &= m(s+\Delta s, s-\Delta s) = m(s,s) +\frac{\partial m}{\partial s_1}(s,s)\Delta s - \frac{\partial m}{\partial s_2} (s,s)\Delta s + \beta(s) \Delta s^2 + O(\Delta s^2)\\
    &=\mathcal{M}(s) + \beta(s) \Delta s^2 + O(\Delta s^2),
\end{align*}
where $s = \frac{s_1 + s_2}{2}, \Delta s = \frac{s_1-s_2}{2}$, and
\begin{equation*}
    \beta(s) = \frac{1}{2} \left(\frac{\partial^2 m(s,s)}{\partial s_1^2} -2 \frac{\partial^2 m(s,s)}{\partial s_1 \partial s_2} + \frac{\partial^2 m(s,s)}{\partial s_2 ^2}\right){.}
\end{equation*}
We cancel out  $O(\Delta s)$ terms by using the symmetry of $m(s_1,s_2)$, according to (b) from Definition \ref{mobility discretization}.
We also obtain
\begin{align*}
    & p_{i,\bar{x}} =  u_{i, \bar{x}x\bar{x}} - \left[\mathcal{Z}(u_{i})\right]_{\bar{x}}{,}\\
    &u_{i, \bar{x}x\bar{x}} = \frac{u_{i+1}-3u_{i}+3u_{-1}-u_{i-2}}{\Delta x^3} = u^{(3)}_{i-\frac{1}{2}} + \alpha (x_{i-\frac{1}{2}})\Delta x^2 + O(\Delta x^4){,}\\
    &[\mathcal{Z}(u_i)]_{\bar{x}} = \mathcal{Z'}(u_{i-\frac{1}{2}})\frac{u_i - u_{i-1}}{\Delta x}+  \mathcal{Z''}(u_{i-\frac{1}{2}})\frac{(u_i - u_{i-\frac{1}{2}})^2 -(u_{i-1} - u_{i-\frac{1}{2}})^2 }{2\Delta x} + O(\Delta x^2) + O(\Delta x^4)  \\
    & \quad = \mathcal{Z'}(u_{i-\frac{1}{2}})\left[{u'}_{i-\frac{1}{2}} + \frac{\Delta x^2}{24}u^{(3)}_{i-\frac{1}{2}} + O(\Delta x^4)\right] +  \mathcal{Z''}(u_{i-1/2})\left[\frac{\Delta x^2}{8} {u'}_{i-\frac{1}{2}} {u''}_{i-\frac{1}{2}} + O(\Delta x^4)\right].
\end{align*} 
After a simplification, {we achieve}
\begin{align*}
    p_{i,\bar{x}} = u^{(3)}_{i-\frac{1}{2}}+ \mathcal{Z'}(u_{i-\frac{1}{2}}){u'}_{i-\frac{1}{2}}+\gamma(x_{i-\frac{1}{2}})\Delta x^2 + O(\Delta x^4)
\end{align*}
for some smooth function $\gamma(x)$. 

As a result, \begin{align*}
    &[m(u_{i-1},u_{i})(1+p_{i,\bar{x}})]_{x}= \frac{1}{\Delta x} \left[m(u_{i},u_{i+1})(1+p_{i+1,\bar{x}})-m(u_{i-1},u_{i})(1+p_{i,\bar{x}})\right]\\
    &= \frac{1}{\Delta x}\left\{\mathcal{M}\left(\frac{u_i+u_{i+1}}{2}\right) + \beta\left(\frac{u_i+ u_{i+1}}{2}\right)\left(\frac{u_{i+1}-u_{i}}{2}\right)^2 + O(\Delta x^3)\right\}\left\{1+ u^{(3)}_{i+\frac{1}{2}}+ \mathcal{Z'}(u_{i+\frac{1}{2}}){u'}_{i+\frac{1}{2}}+\gamma(x_{i+\frac{1}{2}})\Delta x^2 + O(\Delta x^4)\right\}\\
    &-\frac{1}{\Delta x}\left\{\mathcal{M}\left(\frac{u_i+u_{i-1}}{2}\right) + \beta\left(\frac{u_i+ u_{i-1}}{2}\right)\left(\frac{u_{i-1}-u_{i}}{2}\right)^2 + O(\Delta x^3)\right\}\left\{1+ u^{(3)}_{i-\frac{1}{2}}+ \mathcal{Z'}(u_{i-\frac{1}{2}}){u'}_{i-\frac{1}{2}}+\gamma(x_{i-\frac{1}{2}})\Delta x^2 + O(\Delta x^4)\right\}{.}
\end{align*}
Note that for any continuously differentiable function $g(s)${,}
\begin{align*} 
g\left(\frac{u_i + u_{i+1}}{2}\right) =  g(u_{i+\frac{1}{2}}) + {g'}(u_{i+\frac{1}{2}})\frac{{u''}_{i+\frac{1}{2}}}{2} \left(\frac{\Delta x}{2}\right)^2 + O(\Delta x^4),\\
\left(\frac{u_{i+1}-u_{i}}{2}\right)^2 = ({u'}_{i+\frac{1}{2}})^2 \left(\frac{\Delta x}{2}\right)^2 + O(\Delta x^4).
\end{align*}
The above properties can be applied to $\mathcal{M}(s)$ and $\beta(s)$. Hence we conclude 
\begin{align*}
    [m(u_{i-1},u_{i})(1+p_{i,\bar{x}})]_{x} = \left[\mathcal{M}(u_i)(1+u^{(3)}_i-Z'(u_i){u'}_{i})\right]' + O (\Delta x^2).
\end{align*}
\end{proof}

\section{Numerical Simulation}
\label{section: numerical simulation}
In this {S}ection~\ref{section: numerical simulation}, we present numerical simulations based on the continuous time method in {S}ection~\ref{section: PPS formulation} with a practical discrete-time adaptive time stepping method.  We illustrate the benefit of using the BEM over GM in a physically relevant setting in comparison to results from laboratory experiments. Throughout {S}ection~\ref{section: numerical simulation}, we solve problem $(P)$ with the specific functions.
\begin{equation}
\label{eqn: simulation functions}
\begin{split}
    &\mathcal{M}(h) =  \frac{h^3\phi(\alpha h)}{3\phi(\alpha)} ,\, \phi(X)=\frac{3}{16X^{3}}[(1+X)^{4}(4\log (1+X)-3)+4(1+X)^{2}-1],\\
    &\mathcal{Z}_{+}(h) =-\frac{A_{H}}{h^3}, \quad \mathcal{Z}_{-}(h) =  \frac{\alpha}{\eta(1+\alpha h)}.
\end{split}
\end{equation}

This corresponds to the FSM in Ji et al. \cite{ji2019dynamics} with $\lambda = 0$. In their work, setting $\lambda = 0$ matched the experimental data better than setting $\lambda > 0$. Thus, this is a good example to demonstrate our method on. The film stabilization term $\mathcal{Z}_{+}(h)$ takes the functional form of disjoining pressure, with $A_H$ corresponding to the Hamaker constant. Increasing the value of $A_H$ stabilizes the flow. The parameter $\eta$ acts as a scaling parameter in the azimuthal curvature $\mathcal{Z}_{-}(h)$,  and decreasing its value destabilizes the flow.

% \bh{$\mathcal{Z}_{+}(h)$ can be thought of as disjoining pressure, where $A_H$ represents the Hamaker constant. Increasing the value of $A_H$ will stabilize the profile of the flow. $\eta = (\mathcal{H}/\mathcal{R})^2$ is a scaling parameter and decreasing its value will destabilize the profile of the flow.}

For each simulation, we use the functions in~\eqref{eqn: simulation functions} and dimensionless parameters $\alpha$, $\eta$, $A_{H} {>0}$ and a dimensionless initial data $h_0(x)$ on domain $[0,L]$. In {S}ection~\ref{section: comparison of numerical schemes} and {S}ection~\ref{section: adaptive time stepping}, we use dimensionless variables to compare the performance of the two numerical schemes. Whereas, in {S}ection~\ref{section: comparison with experiment}, the simulation is compared with experimental data, so the numerical results are converted back to a dimensional scale. The dimensionless parameters and the initial data are chosen to be in the range of physically meaningful values. Many times, we choose the initial data as a slightly perturbed constant state{,}
 \begin{equation*}
    h_0(x) = \bar{h}(1+0.01\sin(\pi x/L)).
\end{equation*}
The initial condition represents the profile of a flat liquid film at the onset of the instability, where $\bar{h}$ is a critical flow parameter that governs the size, spacing, and frequency of the liquid beads, consequently having a strong influence on the flow regime \cite{sadeghpour2017effects}. 
 
\subsection{Comparison of Numerical Schemes}
\label{section: comparison of numerical schemes}
In this section, we compare the simulation of BEM and GM in a physically relevant setting. We simulate BEM and GM with the functions~\eqref{eqn: simulation functions} with dimensionless parameters $\alpha = 10.6$, $\eta = 0.223227$, $A_{H} = 0.001$. We choose the initial data as
 \begin{equation*}
    h_0(x) = 1.471(1+0.01\sin(\pi x/L)), \,\, L = 24.0.
\end{equation*}

The numerical schemes presented in {S}ection~\ref{section:positivity proof discrete solutions} are continuous in time. Thus, we must discretize the time step for the practical implementation. We discretize the continuous method~\eqref{pps} using the $\theta $-weighted time-step method with $\theta = \frac{1}{2}$ (semi-implicit). This leads to the semi-implicit BEM method:
\begin{bemthod}[Semi-implicit BEM]

\begin{align}
 & \left(1+\alpha \frac{u^{k+1}_{i}+u^{k}_{i}}{2}\right) \left(\frac{u^{k+1}_{i}-u^{k}_{i}}{\Delta t}\right)+  [m(u^{k+1}_{i-1},u^{k+1}_{i})(1+p^{k+1}_{i,\bar{x}})]_{x} = 0, \label{method: BEM discrete time}\\
 & p^{k+1}_{i} = u^{k+1}_{i, \bar{x}x} - \mathcal{Z}_{+}(u^{k+1}_{i})- \mathcal{Z}_{-}(u^{k}_{i}), \\
& u_i(0) = u_0(i\Delta x), \, i = 0,1,2 \cdots N,\\
 & m(s_1,s_2) = \begin{cases}
           \mathcal{M}(s_1)  &\text{if}\,\, s_1 = s_2,\\
        (s_2-s_1)/\int_{s_1}^{s_2}\frac{1}{\mathcal{M}(s)}ds &\text{if}\,\, s_1 \ne s_2.
       \end{cases}
\end{align} 
\end{bemthod}

While other terms involving spatial differences, including $\mathcal{Z}_{+}$, are discretized implicitly, we note that $\mathcal{Z}_{-}$ is discretized explicitly. Such discretization is a well-known technique that increases the stability of a numerical method by treating a concave term and a convex term separately { \cite{barrett1998finite,eyre1998unconditionally,doi:10.1137/0730084}}. One may employ a fully implicit method, but this typically requires $\Delta t$ to be very small. We observe that the semi-implicit method is stable for larger time steps. When using the semi-implicit scheme, we accelerate the simulations by incorporating adaptive time stepping, as discussed in detail in {S}ection~\ref{section: adaptive time stepping}. We also note that one has to numerically calculate $\int_{s_1}^{s_2}\frac{1}{\mathcal{M}(s)}ds$ while evaluating $m(s_1,s_2)$. We use the Simpson's method with 2-4 grids to numerically integrate $1/\mathcal{M}(h)$ on $[u_{i-2},u_{i-1}], [u_{i-1},u_i]$, and so on. Similarly, we discretize the continuous method~\eqref{GM} using the fully implicit time-stepping scheme in \cite{ji2019dynamics}.
\begin{genthod}[Implicit GM with discrete mobility]

\begin{align}
 &\left(1+\alpha \frac{u^{k+1}_{i}+u^{k}_{i}}{2}\right) \left(\frac{u^{k+1}_{i}-u^{k}_{i}}{\Delta t}\right)+  [m(u^{k+1}_{i-1},u^{k+1}_{i})(1+p^{k+1}_{i,\bar{x}})]_{x} = 0, \label{method: GM discrete time}\\
 & p^{k+1}_{i} = u^{k+1}_{i, \bar{x}x} - \mathcal{Z}_{+}(u^{k+1}_{i})- \mathcal{Z}_{-}(u^{k+1}_{i}),\\
& u_i(0) = u_0(i\Delta x), \, i = 0,1,2 \cdots N,\\
& m(s_1,s_2) = \begin{cases}
           \mathcal{M}(s_1)  & \text{if}\,\, s_1 = s_2,\\
           \mathcal{M}\left( 0.5(s_1 + s_2)\right) & \text{if}\,\,s_1 \ne s_2.
       \end{cases}
\end{align}
\end{genthod}

We take $m(s_1,s_2) = \mathcal{M}\left( 0.5(s_1 + s_2)\right)$, which satisfies Definition~\ref{mobility discretization}. The calculation of $m(s_1,s_2)$ for GM is relatively simple since it does not require numerical integration. As mentioned before, the GM is fully implicit so $\Delta t$ needs to be well-controlled and kept small. Thus, when we compare the simulation of BEM~\eqref{method: BEM discrete time} to GM~\eqref{method: GM discrete time} in {S}ection~\ref{section: comparison of numerical schemes}-\ref{section: comparison with experiment}, we use a fixed $\Delta t$ unless the numerical method fails to converge in which case we decrease $\Delta t$ by half. In {S}ection~\ref{section: adaptive time stepping}, we show an example of BEM~\eqref{method: BEM discrete time} implemented with the adaptive time stepping algorithm (see Algorithm~\ref{algorithm: adaptive time stepping}) to demonstrate more efficient implementation. For both methods, we use Newton's method at each time step to solve discrete nonlinear equations. The Newton's method returns \texttt{True} if it successfully solves for the numerical solution at the next time step within 15 iterations; otherwise, it returns \texttt{False}. When the Newton's method fails, we decrease $\Delta t$ by 50\% and try Newton's method again. The detailed procedure of the Newton's method is written in Algorithm~\ref{algorithm: newton's method} of~\ref{asection: Algorithms used for Numerical Simulation}.

\begin{figure}[h!]
\begin{subfigure}{0.50\textwidth}
\includegraphics[width = 1.0\linewidth]{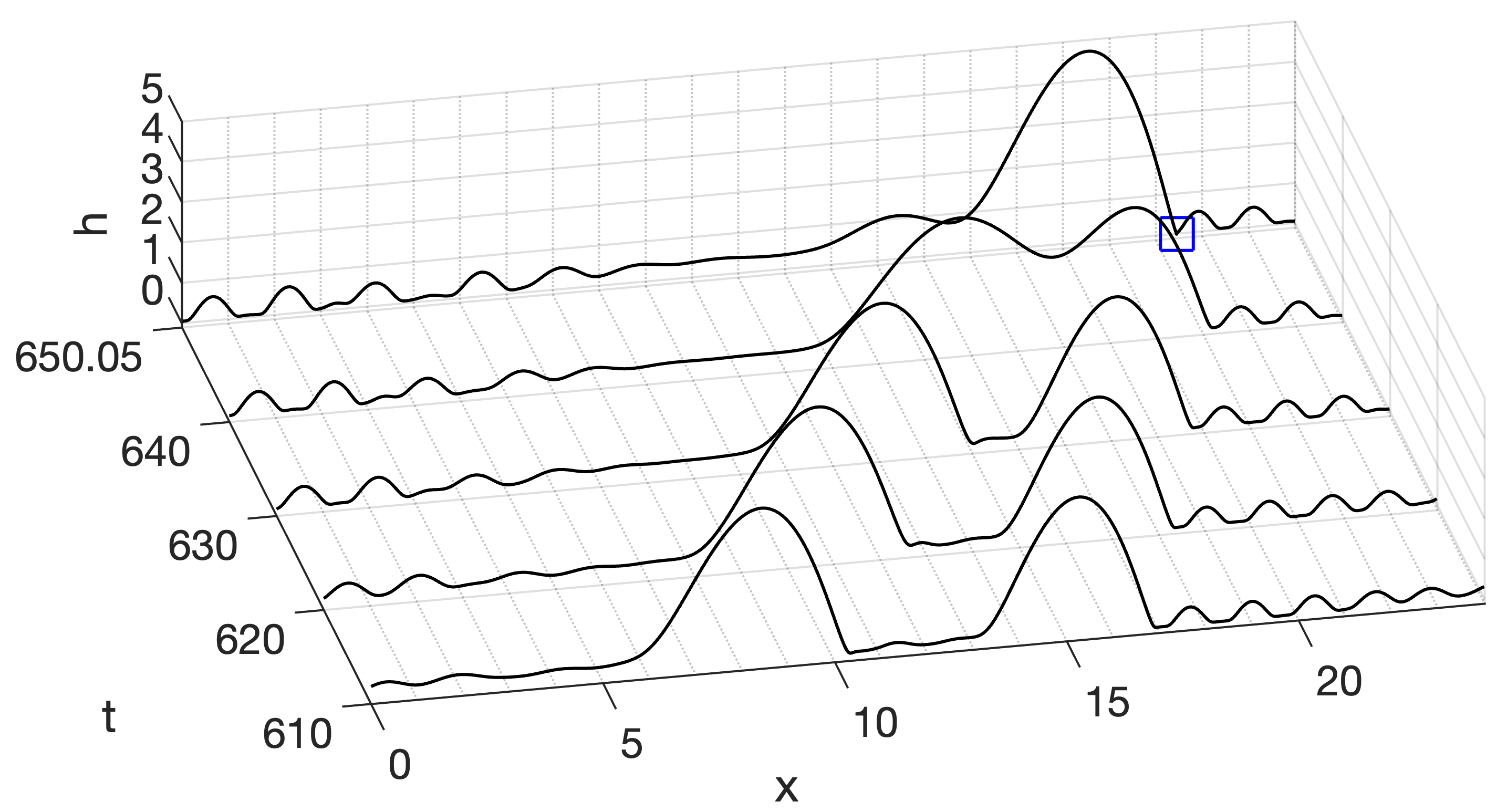}  
\caption{GM}
\end{subfigure}
\begin{subfigure}{0.47\textwidth}
\includegraphics[width = 1.0\linewidth]{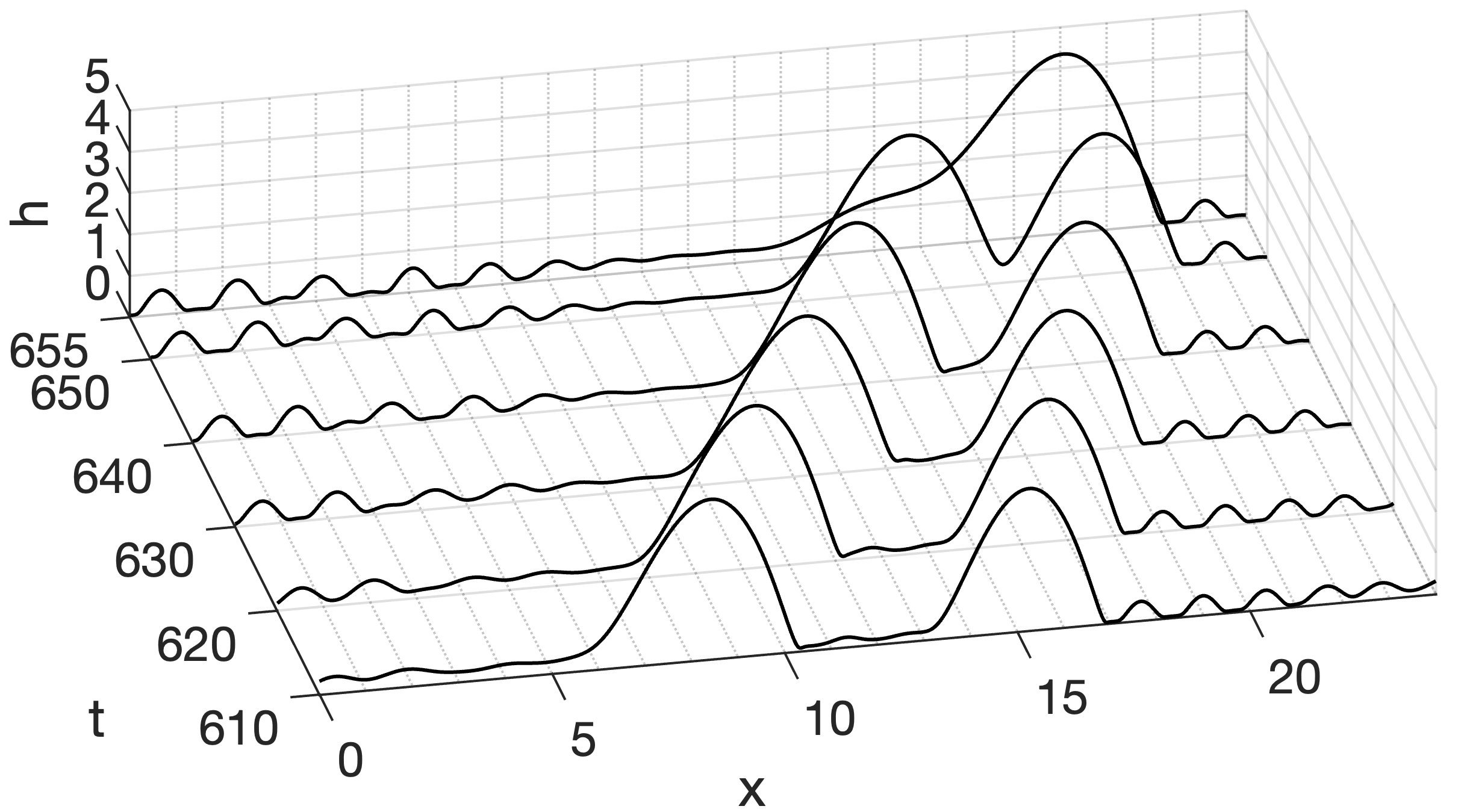}  
\caption{BEM}
\end{subfigure}
\caption{Simulation results with (a) Generic Method (GM)~\eqref{method: GM discrete time} from $t = 610$ to $t = 650.05$ and (b) Bounded Entropy Method (BEM)~\eqref{method: BEM discrete time} from $t = 610$ to $t ={ 655}$ on a coarse grid (3072 grid points on $[0, 24]$). The details of the simulation {are} described in Sec.~\ref{section: comparison of numerical schemes}. The plots illustrate the difference between the evolution profiles of traveling droplets as they merge. At $t = 640$, GM prematurely fuses two droplets while BEM does not. Because of the instability caused during the merging, GM develops negativity at $t = 650.05$, indicated by the blue square marker. The instability also causes the Newton's method to fail for GM at $t = 650$, so $\Delta t = 0.1$ is decreased by half $\Delta t = 0.05$. On the other hand, BEM can handle such {an} instability {(see $t = 655$)} and maintain the positivity of the film thickness while keeping the time step size $\Delta t = 0.1$.}
\label{fig: evolution comparison simulation}
\end{figure}

Figure~\ref{fig: evolution comparison simulation} and Figure~\ref{fig: comparison snapshot of BEM and GM} compare numerical simulations of the GM~\eqref{method: GM discrete time} and the BEM~\eqref{method: BEM discrete time} methods on a dimensionless domain $[0,24]$. In Figure~\ref{fig: evolution comparison simulation}, one observes a classic evolution of isolated droplet dynamics where the bigger droplet collides with a smaller one and merges into one droplet as the solution propagates. Figure~\ref{fig: comparison snapshot of BEM and GM} is a closeup of the results from Figure~\ref{fig: evolution comparison simulation} at the time of singularity. To generate Figure~\ref{fig: evolution comparison simulation} and Figure~\ref{fig: comparison snapshot of BEM and GM}, we simulate GM on a fine grid (6144 grid points on $[0, 24]$) until dimensionless time $t = 610$ with $\Delta t = 10^{-4}$ fixed. At this time $t = 610$, we extract the data corresponding to a coarse grid (3072 grid points on $[0, 24]$, which is twice the grid size of the fine grid) and set it as an initial condition for Figure~\ref{fig: evolution comparison simulation} and Figure~\ref{fig: comparison snapshot of BEM and GM}. From this time, we simulate BEM and GM on the coarse grid with fixed $\Delta t = 0.1$. Figure~\ref{fig: evolution comparison simulation}(a) illustrates the evolution of the simulation of GM while Figure~\ref{fig: evolution comparison simulation}(b) illustrates the evolution of the simulation of BEM. At $t = 650.05$ in Figure~\ref{fig: evolution comparison simulation}(a), one observes that the numerical solution becomes negative at one grid point in an underresolved mesh setting. Notice that Figure~\ref{fig: evolution comparison simulation}(a) has a singularity at $t = 650.05$ instead of $t = 650.0$ or $t=650.1$ despite keeping $\Delta t = 0.1$ fixed. This is because, at $t = 650$, the Newton's method for GM fails. As a consequence, the time steps $\Delta  t = 0.1$ is decreased by half, $\Delta t = 0.05$ (see Algorithm~\ref{algorithm: newton's method} in~\ref{asection: Algorithms used for Numerical Simulation}). The Newton's method succeeds after decreasing the time step by half, yet the recovered solution has a negative $h$ value. On the other hand, BEM successfully maintains positivity throughout the dynamics.

In Figure~\ref{fig: comparison snapshot of BEM and GM}, one observes the detailed profile of each simulation at the time of the numerical singularity. We continue the simulation in Figure~\ref{fig: evolution comparison simulation} until $t = 654.$ Note that we observe the numerical singularity on the coarse GM~\eqref{method: GM discrete time} simulation at $t = 650.05$ for the first time. The coarse GM simulation continues to have a negative value in contrast to the coarse BEM~\eqref{method: BEM discrete time} simulation, which stays positive. Having a singularity is critical since it often prevents further numerical simulation and provides inaccurate results. It is also unphysical because no finite time rupture is observed in the experiment. Such numerical singularities are commonly observed with the GM method in this dynamic regime of the simulation. The details of the fixed time closeup are described in the caption of Figure~\ref{fig: comparison snapshot of BEM and GM}.

\begin{figure}[h!]
\begin{subfigure}{0.60\textwidth}
\centering
\includegraphics[width = 0.8\linewidth]{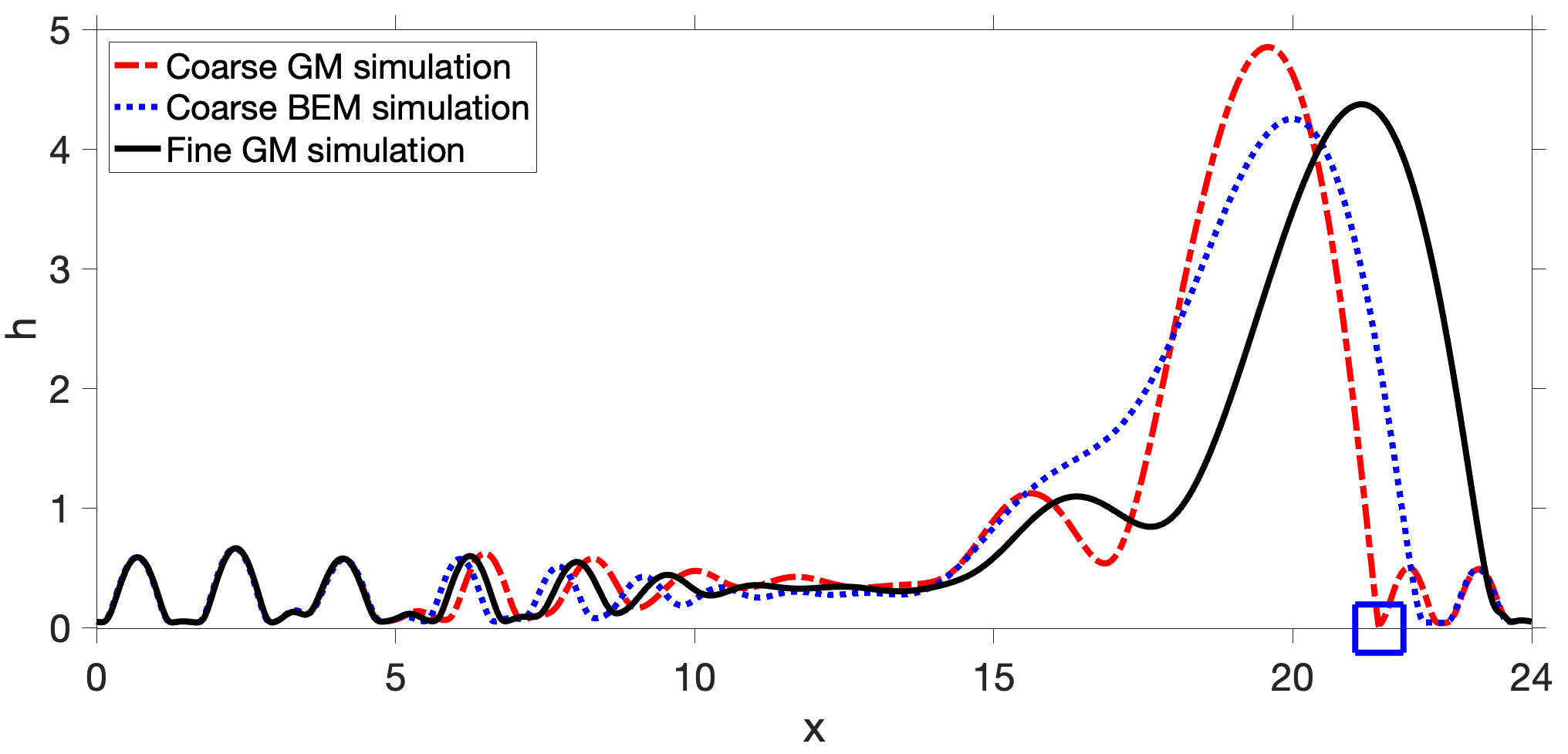}
\caption{Simulation comparison}
\end{subfigure}
\begin{subfigure}{0.37\textwidth}

\includegraphics[width = 1.0\linewidth]{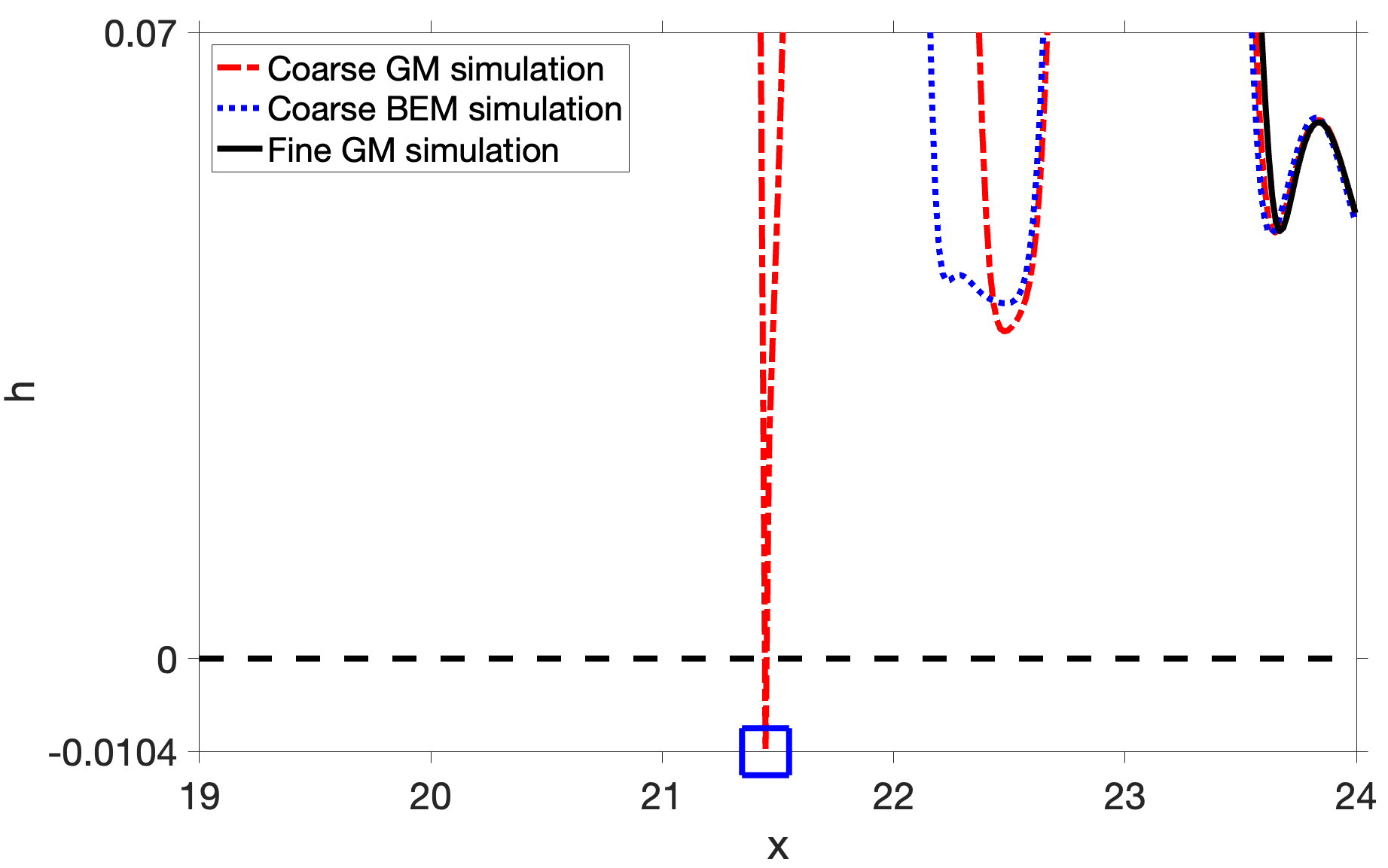} 
\caption{Simulation comparison (enlarged)}
\end{subfigure}
\caption{A closeup of a coarse grid simulation (3072 points on $[0, 24]$) around $t =  654$. The details of the simulation are described in Sec.~\ref{section: comparison of numerical schemes}. The coarse GM simulation is taken at $t = 654.45$, the coarse BEM simulation is taken at $t = 654.40$, and the fine GM simulation is taken at $t = 654.41$. Figure~\ref{fig: comparison snapshot of BEM and GM}(a) represents the full profile, and Figure~\ref{fig: comparison snapshot of BEM and GM}(b) represents the closeup profile near the singularity. Note that $h$ of the coarse GM simulation goes below the zero line indicated in dashed black at $t = 654.4500$, whereas the coarse BEM simulation does not go below the zero line at $t = 654.400$. The fine GM simulation uses twice as many grid points ($6144$ grid points on $[0, 24]$) and is captured at $t = 654.4100$.  Besides the phase shift, the coarse BEM simulation agrees better with the fine GM simulation in the sense that the average $l_2$ error ($l_2$ error = $2.0116$) across the domain is lower than the average $l_2$ error caused by coarse GM simulation ($l_2$ error = $2.5999$). The average $l_2$ error was calculated by equation~\eqref{eqn: l2 error}.} 
\label{fig: comparison snapshot of BEM and GM}
\end{figure}

One can see that the singularity affects the shape of the solution making the numerical prediction inaccurate. Let us take a closer look at the downstream and upstream profile of the droplet in Figure~\ref{fig: comparison snapshot of BEM and GM}. We see that the coarse BEM~\eqref{method: BEM discrete time} simulation has more smoothness downstream of the droplet (from $x = 23$ to $x = 24$), whereas GM~\eqref{method: GM discrete time} simulation has a finite time pinchoff (marked by a blue square). We also see that BEM's wavy pattern at the upstream matches better with the experiment than the GM's (from $x = 0$ to $x = 15$). Furthermore, the coarse BEM simulation has a lower average $l_2$ error ($l_2$ error = $2.0116$) than the error caused by the coarse GM simulation ($l_2$ error = $2.5999$) despite using different schemes. Here, we define the average $l_2$ error as
\begin{equation}
\label{eqn: l2 error}
   l_2\,\text{error} = \frac{1}{L} \sum_i (u_i -u^{*}_i)^2,
\end{equation}
where $u_{i}$ is the simulation results on the coarse grid and $u^{*}_i$ is the simulation result on the fine grid at the corresponding points of the coarse grid.

\subsection{Comparison with laboratory experiment}
\label{section: comparison with experiment}
Here we compare predictions from our method with the experimental data. In the experiment, the coating flow is created by injecting a fluid into the nozzle with an inner diameter of 0.8 mm using a programmable syringe pump. We use Rhodorsil silicone oil v50, which is a well-wetting liquid with the density $\rho$ = 963 kg/m$^3$,  kinematic viscosity $\nu$ = 50 mm$^2$/s,  and surface tension $\sigma$ = 20.8 mN/m at 20$^{\circ} C$. The corresponding capillary length $l_c$ = 1.5 mm. The fluid flows along 0.6 m-long Nylon string that is hung vertically. The radius of the Nylon string is 0.1 mm.  A high-speed camera captures the flow at a frame rate of 1000 frames/second. We estimate the measurement uncertainty in the liquid bead radius and length to be approximately $\pm$ 0.08 mm, and that in the liquid bead spacing approximately $\pm$ 0.3 mm. Further details of our experimental setup, procedure, and data analysis can be found in a previous publication \cite{sadeghpour2017effects}.

We consider two cases: the Rayleigh-Plateau case and the isolated droplet case. We do not consider the convective regime because it requires different boundary conditions. For the first case, we let the flow rate be 0.08 g/s for a fiber with a radius of 0.1 mm and a nozzle inner diameter (nozzle ID) of 0.8 mm. The experiments and corresponding numerical method both exhibit the Rayleigh-Plateau regime (see Figure~\ref{fig: RP}). For the second case, we let the flow rate be 0.006 g/s for the same fiber.  For these parameters, one observes the isolated droplet regime (see Figure~\ref{fig: IS}).

\begin{figure}[h!]
\includegraphics[width = 1.0\linewidth]{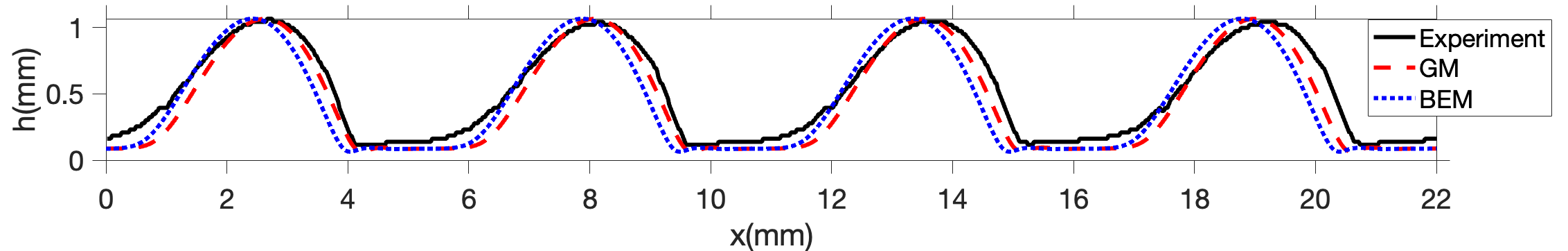}  
\caption{Comparison between laboratory experimental data and simulation data of the numerical methods. The details of the simulation and laboratory data acquisition are described in Sec.~\ref{section: comparison with experiment}. GM~\eqref{method: GM discrete time} and BEM~\eqref{method: BEM discrete time} were simulated with a fine grid ($1000$ grid points on the domain $[0, 5]$) and then shifted horizontally to match the phase. The experimental profile (the black solid line) follows the Rayleigh-Plateau regime, extracted from an experiment conducted with a flow rate of 0.08 g/s, a fiber radius of 0.1 mm, and nozzle ID of 0.8 mm.}
\label{fig: RP}
\end{figure}

\begin{figure}[h!]

\includegraphics[width = 1.0\linewidth]{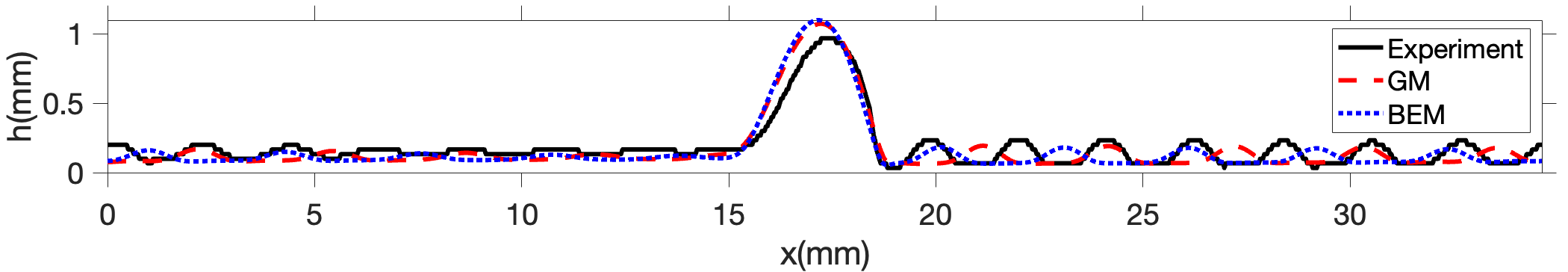}  
\caption{Comparison between laboratory experimental data and simulation data of the numerical methods. The details of the simulation and laboratory data acquisition are described in Sec.~\ref{section: comparison with experiment}. GM~\eqref{method: GM discrete time} and BEM~\eqref{method: BEM discrete time} were simulated with a relatively coarse grid ($1999$ grid points on the domain $[0, 39.338]$) and then shifted horizontally to match the phase. The experimental profile (the black solid line) follows the isolated droplet regime, extracted from an experiment conducted with a flow rate of 0.06 g/s, a fiber radius of 0.1 mm, and a nozzle ID of 0.8 mm. }
\label{fig: IS}
\end{figure}

The experimentally obtained images are processed and segmented by the built-in methods in \textsc{Matlab}, where we have incorporated the Canny method and Otsu's method. By processing high-resolution optical images and using other experimental values such as the flow rate, fiber radius, the density of the fluid $\rho$, and the kinematic viscosity $\nu$, we obtain a characteristic length scale $h_N$ and the estimated period $L$ of traveling beads. Using these values, one can calculate dimensionless parameters $\alpha$, $\eta$, and nondimensional scaling constants. We must perform this task for each experimental case since the resulting parameters are different. To generate Figure~\ref{fig: RP} and Figure~\ref{fig: IS}, we simulate GM~\eqref{method: GM discrete time} and BEM~\eqref{method: BEM discrete time} on a dimensionless domain and scale back to dimensional data to compare with the experimental data.

Figure~\ref{fig: RP} illustrates the simulation results of GM~\eqref{method: GM discrete time} and BEM~\eqref{method: BEM discrete time} compared with the experimental data of the Rayleigh-Plateau regime. We simulate GM and BEM with the functions~\eqref{eqn: simulation functions} with corresponding $\alpha$ =  5.8856 and $\eta$ = 0.2912 with a stabilizing parameter chosen to be $A_{H} = 10^{-11} $. We choose the initial data as a slightly perturbed constant state
\begin{equation*}
h_0(x) = \bar{h}(1+0.01\sin(\pi x/L)),\,\, L = 5.0, \,\,\bar{h} = 0.9568.
\end{equation*}
Note that the stabilizing parameter $A_H$ is relatively small compared to $\eta$ or $\alpha$ or the average film thickness $\bar{h}$. We simulate GM and BEM on a fine grid until dimensionless time $t = 250.006$ with adaptive time where $10^{-3} \leq \Delta t \leq 10^{-2}$. The adaptive time stepping was used to expedite the simulation process, but we made sure the $\max \Delta t$ is small enough for an accurate simulation (i.e. one results in the almost identical simulation if we keep $\Delta t = 10^{-4}$, fixed). After the simulation, we dimensionalize the data by multiplying scaling constants with respect to space and time. One can see that the three simulations match well despite the fact that both GM and BEM slightly underpredict the bead traveling speed as they go further along the $x$-direction.

Figure~\ref{fig: IS} illustrates the simulation results of GM~\eqref{method: GM discrete time} and BEM~\eqref{method: BEM discrete time} compared with the experimental data of the isolated droplet regime. We simulate GM and BEM with the functions~\eqref{eqn: simulation functions} with corresponding $\alpha =  3.092621559$ and $\eta = 0.123$ with a stabilizing parameter chosen to be $A_{H} = 4.0\times10^{-2} $. Note that the stabilizing parameter $A_{H}$ is bigger than the value we choose to simulate the Rayleigh-Plateau regime. We have simulated GM and BEM with a slightly perturbed constant state condition as the initial data, but the simulation has resulted in a dramatically different and unphysical profile from the experimental data. We expect this to be natural because the profile of the isolated droplet regime is inherently more complex than the Rayleigh-Plateau regime. We expect that there are several different steady states, and it may depend on the initial data intricately. Therefore, we extract the initial condition from the experiment and use an interpolating sine series to find the best-fitting smooth function. We enforce a periodic boundary condition by cropping the data appropriately so that the $h_0$ at $x = 0$ matches $h_0$ at $x = L$. After cropping, we use a moving average filter to smooth data even further. The code implementation details are published in a GitHub repository \cite{github}. After acquiring the initial data, we simulate GM and BEM on a fine grid until dimensionless time $t =  807.107$ for GM and $t = 827.8070$ for BEM with adaptive time where $10^{-3} \leq \Delta t \leq 10^{-2}$. The adaptive time stepping is used to expedite the simulation process again. Similar to the Rayleigh-Plateau simulation, we dimensionalize the data by multiplying scaling constants with respect to space and time. One can see that both simulations predict the width of the droplet well with slight overprediction of the height of the droplet. We note that BEM describes the pinchoff behavior downstream of the bead better (from $x = 18mm$ to $x=20 mm$)  than GM since GM is nearly flat in this region (from $x = 18mm$ to $x=20 mm$) in Figure~\ref{fig: IS}.

\RestyleAlgo{ruled}
\begin{algorithm}[H]
 \caption{Adaptive {time stepping} for BEM~\eqref{method: BEM discrete time} described in Sec.~\ref{section: adaptive time stepping}}\label{algorithm: adaptive time stepping}
\KwData{Discrete initial data $\mathbf{u}^{0}$, time step $\Delta t$, final time $t_{end}$, adaptive time tolerance  $tol_1$, the maximum number of count \texttt{countMax}}
  \KwResult{$\mathbf{u}^{k}$ at the $t_{end}$ if the simulation succeeds. Otherwise outputs $\mathbf{u}^{k}$ at the time of the simulation failure.}
    \algrule
\ttfamily
SimulateAdaptive($\mathbf{u}^{0}$,$\Delta t$,$t_{end}$):\\
set $t = 0$, bad = 0, count = 0, and $\mathbf{u}^{k} = \mathbf{u}^{0}$\;
\While{$t < t_{end}$}
{
    \eIf{\texttt{NewtonMethod($\mathbf{u}^{k}$,$\Delta t$,$tol_1$) == True}}
    {
        $t=t+\Delta t,\mathbf{u}^{k} = \mathbf{u}^{k+1}$\Comment*[r]{Update time and solution}
         $\Delta t = \Delta t * 1.01$\Comment*[r]{Increase $\Delta t$ by 1\%}
        calculate $\mathbf{e}^{k+1}, \mathbf{e}^{k}$, and  $\mathbf{LTE}(t^{k+1})$\;
        \If{$\lVert \mathbf{LTE}(t^{k+1})\rVert_{\infty} < tol_1$}
        {
            count=count+1\;
            \If{count $=$\texttt{countMax}}
            {
                $\Delta t = \Delta t * 1.2$\Comment*[r]{Increase $\Delta t$ by 20\%}
                count = 0\;
            }
        }

    }
    {
        bad = bad+1\;
        $\Delta t = \Delta t * 0.5$\Comment*[r]{Try the Newton's Method with smaller $\Delta t$}
        \If{bad $>4$}{
            exit(1)\Comment*[r]{Stop the simulation}
        }
    }
}
\label{alg: adaptive time BEM}    
\end{algorithm}

\subsection{Adaptive time stepping and computational efficiency}
\label{section: adaptive time stepping}
Adaptive time stepping can optimize the performance of the numerical method while still accurately capturing the droplet propagation. In the early stage of the computation, we expect to see a lot of change in the shape of the graph. Therefore, one wishes to keep the time step very small to capture the accurate profile of the solution. However, as the computation progress, the algorithm approaches a nearly steady state. It becomes costly to implement a small time step calculation for many iterations, while such a small step iteration does not contribute much to the change of the profile or the phase. Here we use an adaptive time stepping scheme motivated by the method in  \cite{bertozzi1994singularities,kostic2013statistical}.

The main idea is to use a dimensionless local truncation error for every time step and see if it surpasses a tolerance value that we impose. This choice of adaptive method was inspired by similar ideas in  \cite{bertozzi1994singularities,kostic2013statistical}. We define the dimensionless local truncation error using the following formula, 
\begin{equation*}
   LTE(t^{k+1})_{i} =  \biggl \vert e^{k+1}_{i}-\frac{\Delta t}{\Delta t_{old}} e^k_{i}\biggr \vert{,}
\end{equation*}
where 
\begin{equation*}
    e^{k+1}_{i} = \frac{u^{k+1}_{i}-u^{k}_{i}}{u^{k}_{i}},\,\, e^{k}_{i} = \frac{u^{k}_{i}-u^{k-1}_{i}}{u^{k-1}_{i}},\,\,\Delta t = t^{k+1}-t^{k},\,\,\Delta t_{old} = t^{k}-t^{k-1}.
\end{equation*}
The details of the entire algorithm are given by Algorithm~\ref{alg: adaptive time BEM}. Note that we store information from the previous timestep $\mathbf{u}^{k-1}$ to calculate $\mathbf{LTE}(t^{k+1})$. If one successfully calculates $\mathbf{u}^{k+1}$ with the Newton's method, we increase our time step by 1\%, calculate $\mathbf{LTE}(t^{k+1})$, and check $||\mathbf{LTE}(t^{k+1})||_{\infty} < tol_1$. If $||\mathbf{LTE}(t^{k+1})||_{\infty} < tol_1$ more than \texttt{countMax} times (in our case, we let \texttt{countMax} = $3$ throughout Sec.~\ref{section: adaptive time stepping}), we increase our time step by $20\%$. To speed up the simulation even further, one may increase the percentage to a higher value while the time step reduces by half if the Newton's method fails. If the error is bigger than $tol_1$, we proceed to calculate the next time step. In the case when the Newton's method fails, we decrease our time step by $50\%$ and try the Newton's method again. 

\subsubsection{An adaptive time stepping example without a singular behavior} 
\label{sec:nosing}
We simulate the semi-implicit BEM~\eqref{method: BEM discrete time} with the functions~\eqref{eqn: simulation functions}, and dimensionless parameters $\alpha = 5.0$, $\eta = 0.02$, $A_{H} = 10^{-5}$. We choose the initial data as
 \begin{equation*}
    h_0(x) = 0.95(1+0.01\sin(\pi x/L)), \,\, L = 1.0,
\end{equation*}
and use $100$ grid points on [0,1]. We start with initial $\Delta t = 10^{-3}$ and use Algorithm~\ref{algorithm: adaptive time stepping} to increase $\Delta t$ until $t = 1.0$ with $tol_1 = 10^{-1}$ and $\texttt{countMax} = 3$. Figure~\ref{nosing_fig} illustrates the increase of $\Delta t$ throughout the simulation when $\eta$ is relatively high and the stabilizing parameter $A_H$ is relatively high. Because the parameters are selected to simulate a relatively stable coating flow, the $||\mathbf{LTE}(t^{k+1})||_{\infty} < tol_1$ condition is satisfied whenever the Newton's method succeeds. Thus, every 3rd-time step (note that \texttt{countMax} = $3$), $\Delta t$ increases by 20\%.

\begin{figure}[h!]
%\label{nosing_fig}
\centering
\begin{subfigure}{0.49\textwidth}
\includegraphics[width = 1.0\linewidth]{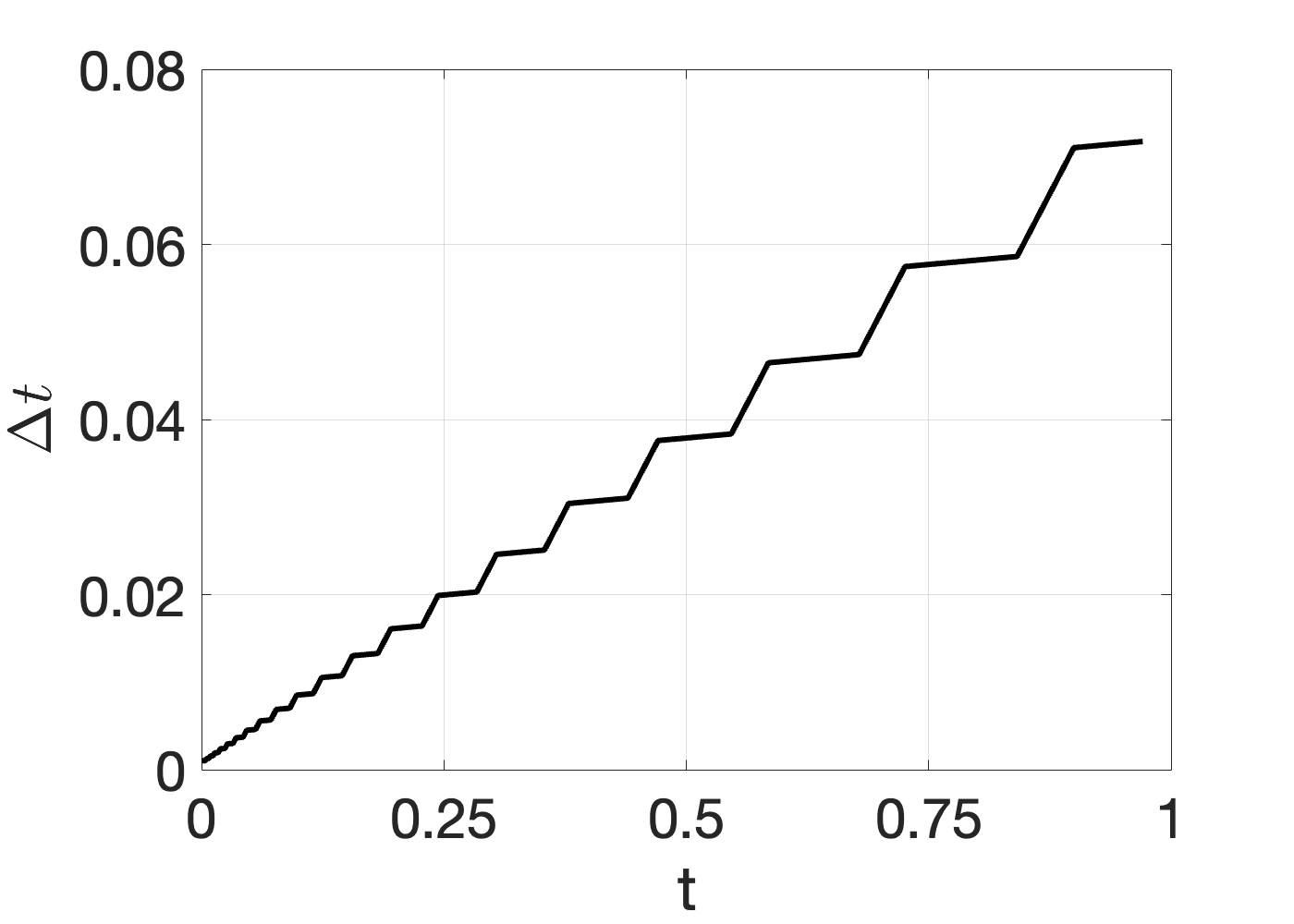} 
\caption{$\Delta t$ over simulation}
\end{subfigure}
\begin{subfigure}{0.49\textwidth}
\includegraphics[width = 1.0\linewidth]{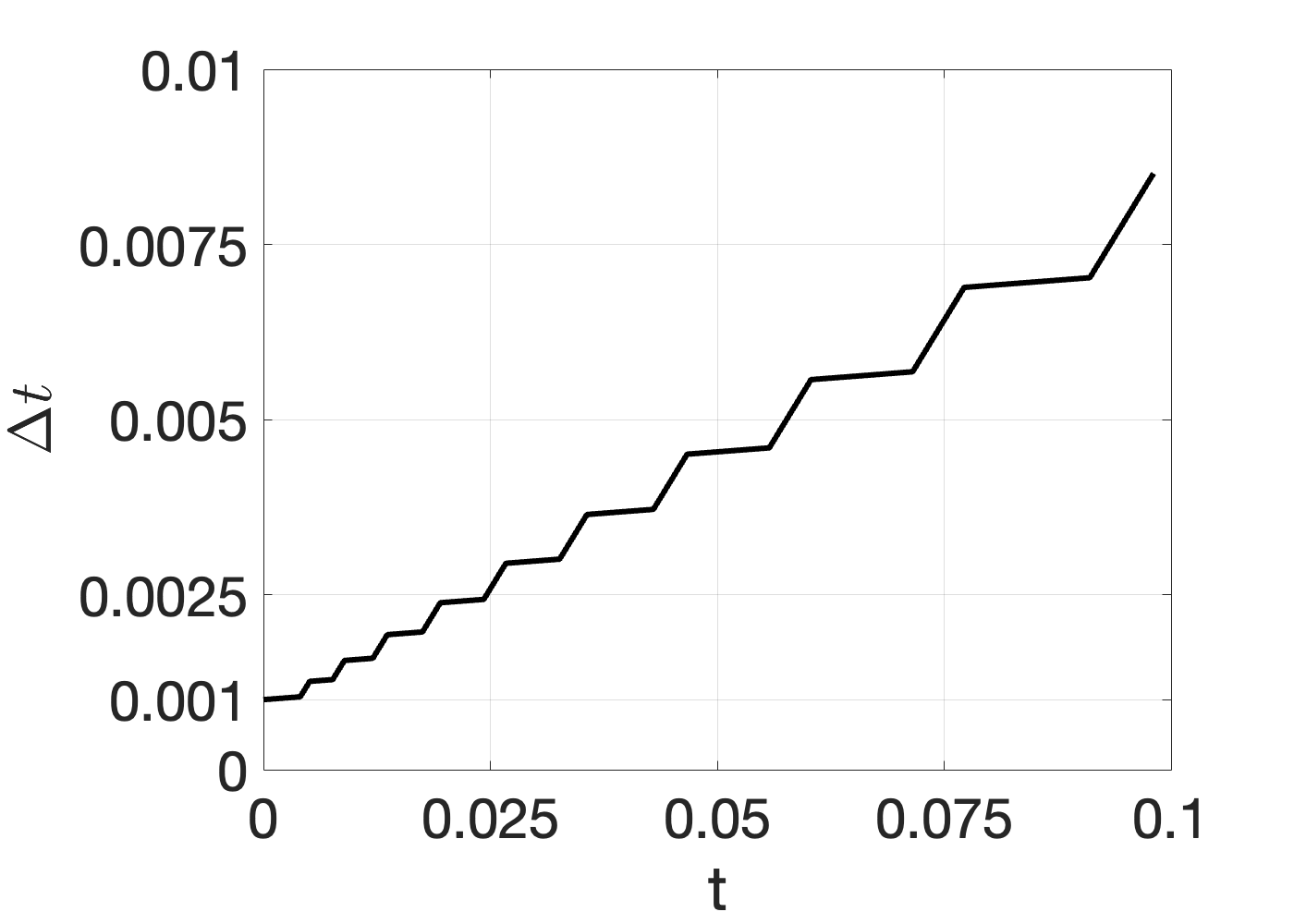} 
\caption{$\Delta t$ over simulation (enlarged)}
\end{subfigure}
\caption{Plots of $\Delta t$ for the simulation described in Sec.~\ref{sec:nosing} for $0<t<1$. The Newton's iteration always succeeds so $\Delta t$ continuously increases by $1\%$ every time while an additional increase of $20\%$ (20 times in total) occurs every 3rd time. The image on the right shows a close-up of the early time interval from $t = 0$ to $t = 0.1$. 
}
\label{nosing_fig}
\end{figure}

\subsubsection{An adaptive time stepping example with near singular behavior}
\label{sec: sing}
We simulate the semi-implicit BEM~\eqref{method: BEM discrete time} with the functions~\eqref{eqn: simulation functions}, and dimensionless parameters $\alpha = 5.0$, $\eta = 0.005$, $A_{H} = 0$. We choose the initial data as
 
 \begin{equation*}
    h_0(x) = 0.95(1+0.01\sin(\pi x/L)), \,\, L = 1.0,
\end{equation*}
and use $100$ grid points on [0,1]. We start with initial $\Delta t = 10^{-3}$ and use Algorithm~\ref{algorithm: adaptive time stepping} to increase $\Delta t$ until $t = 1.0$ with $tol_1 = 10^{-1}$ and $\texttt{countMax} = 3$ again. Since we set the stabilizing parameter $A_H = 0$ and take a lower $\eta$ value, we observe a singular behavior of the simulated flow (see Figure~\ref{fig: near sing}). Figure~\ref{sing_fig} illustrates the increase of $\Delta t$ throughout the simulation when there is a singular behavior. Unlike Figure~\ref{nosing_fig}, the $||\mathbf{LTE}(t^{k+1})||_{\infty} \geq tol_1$ from $t = 0.045228$ to $t = 0.0918907$. In this region, $\Delta t$ is increased by $1\%$ to carefully handle the transition of droplet dynamics (see Figure~\ref{fig: near sing}).

\begin{figure}[h!]
\centering
\begin{subfigure}{0.49\textwidth}
\includegraphics[width = 1.0\linewidth]{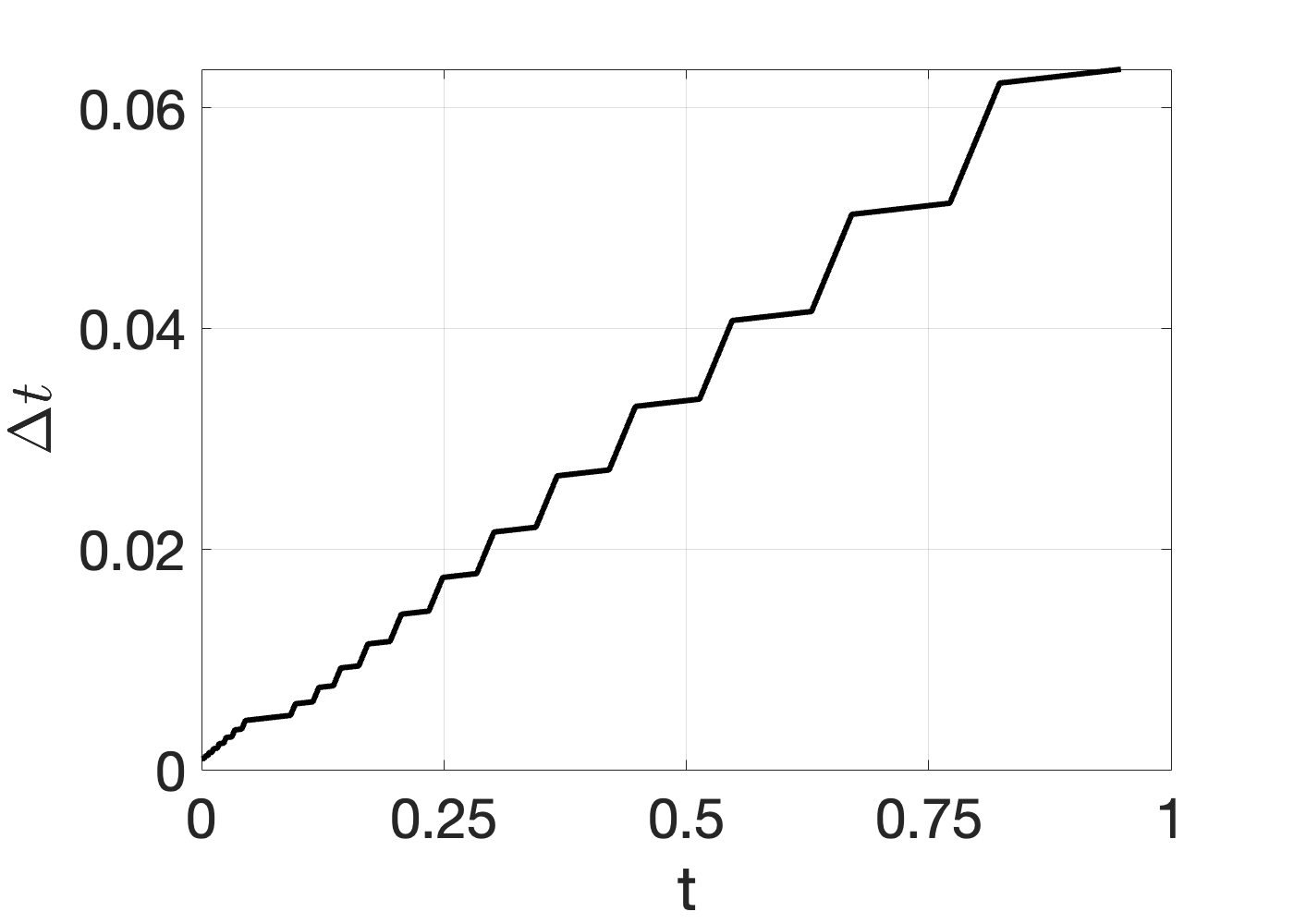} 
\caption{$\Delta t$ over simulation}
\end{subfigure}
\begin{subfigure}{0.49\textwidth}
\includegraphics[width = 1.0\linewidth]{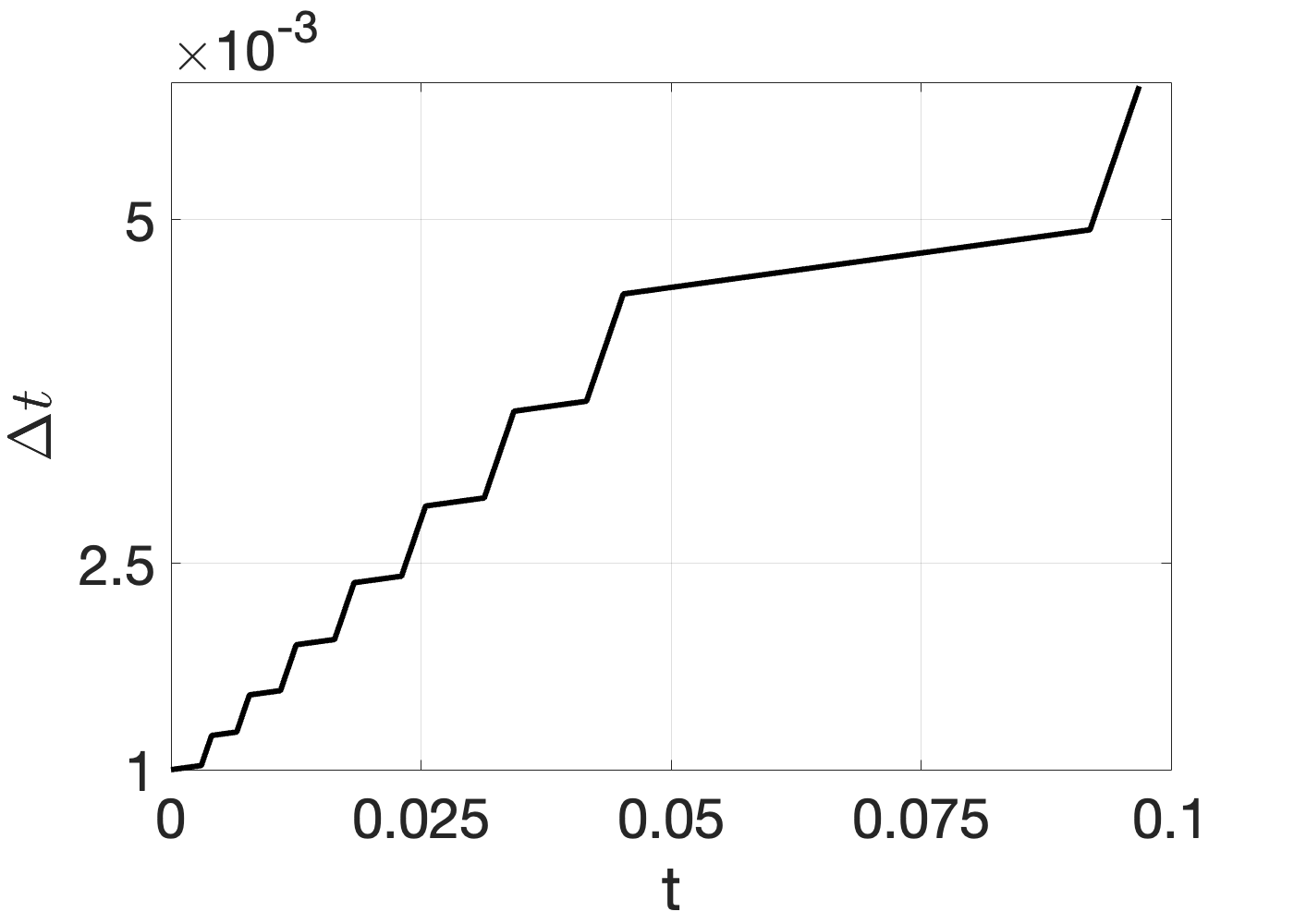} 
\caption{$\Delta t$ over simulation (enlarged)}
\end{subfigure}
\caption{Plots of $\Delta t$ for the simulation described in Sec.~\ref{sec: sing} for $0<t<1$. The Newton's iteration always succeeds so $\Delta t$ continuously increases by $1\%$ every time. However, unlike Figure~\ref{nosing_fig}, an additional $20\%$ increase occurs irregularly. In fact, from $t = 0.045228$ to $t = 0.0918907$, $\Delta t$ does not increase. The image on the right shows a close-up of the early time interval from $t = 0$ to $t = 0.1$. 
}
\label{sing_fig}
\end{figure}
\begin{figure}[h!]
    \centering
    \includegraphics[width = 0.65\linewidth]{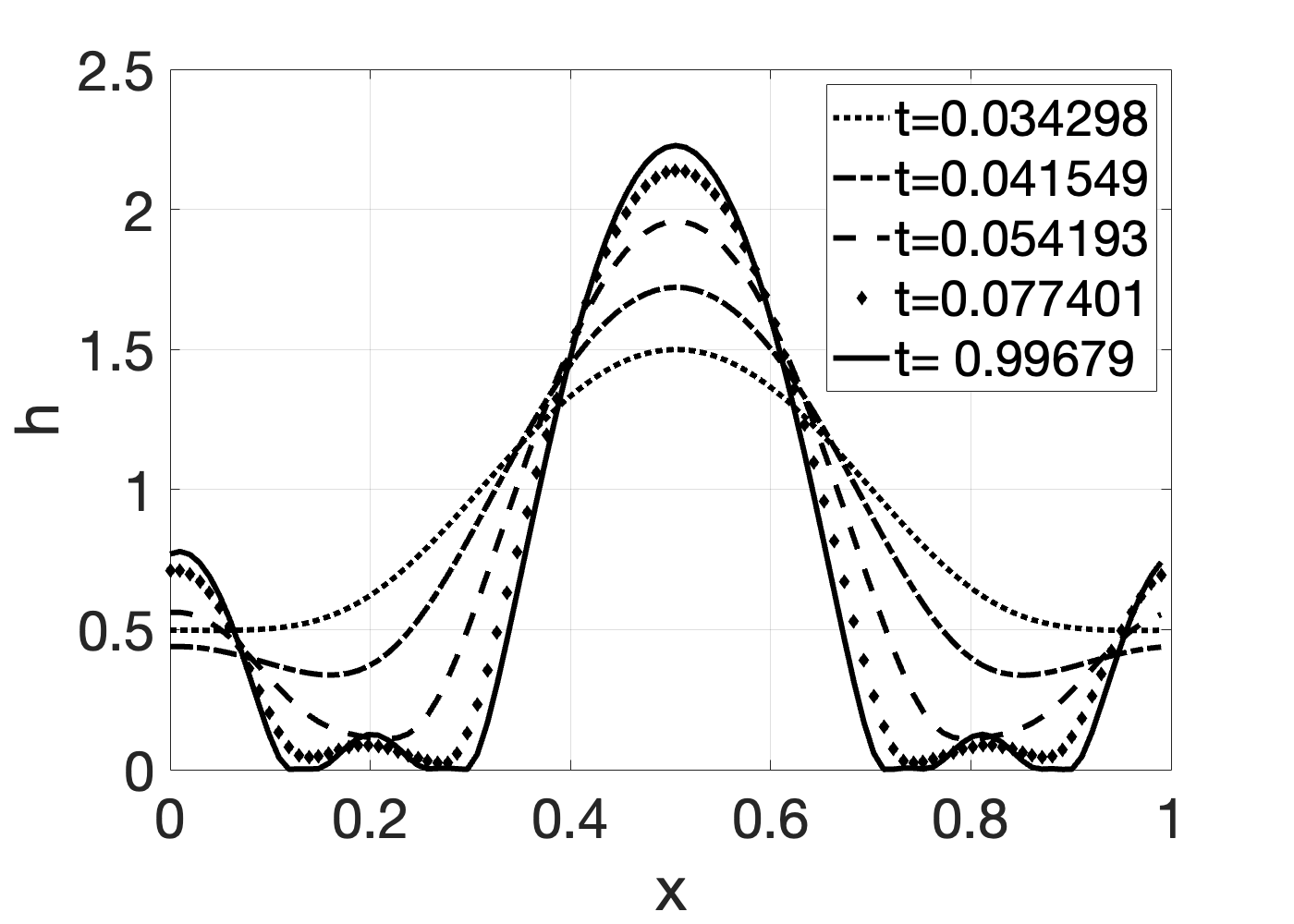}
    \caption{Evolution of a flow with a singular behavior described in Sec.~\ref{sec: sing}. All of the plots have $h \geq 6.0942\times 10^{-4}$.}
    \label{fig: near sing}
\end{figure}

\subsubsection{Computational efficiency and accuracy of the adaptive time stepping}
\label{section: computational efficiency}
In this section, we demonstrate the computational efficiency of our method BEM over GM. We simulate semi-implicit BEM~\eqref{method: BEM discrete time} and implicit GM~\eqref{method: GM discrete time} with the functions~\eqref{eqn: simulation functions}, and dimensionless parameters $\alpha = 5.0$, $\eta = 0.005$, $A_{H} = 0$. We choose the initial data as
 \begin{equation*}
    h_0(x) = 0.45(1+0.01\sin(\pi x/L)), \,\, L = 1.0,
\end{equation*}
and record the CPU time of each method on three different grid sizes. Note that this is a similar setting as the simulation run in Sec.~\ref{sec: sing}. When we use the fixed time stepping (see Algorithm~\ref{algorithm: fixed time stepping} in~\ref{asection: Algorithms used for Numerical Simulation}) for BEM and GM, we let $\Delta t = 10^{-3}$. When we use the adaptive time stepping, which is only used for BEM, we use Algorithm~\ref{algorithm: adaptive time stepping} with initial $\Delta t = 10^{-3}$, $tol_1 = 10^{-3}$, and $\texttt{countMax} = 3$. Each GM simulation is run until the numerical solution fails to preserve positivity, resulting in different termination times. On the other hand, each BEM always preserves the positivity of the numerical solution regardless of using any time stepping method so that it can be run until any time. For a fair CPU time comparison, we run BEM until GM fails with the respective grid sizes. By examining Table 1, one may notice the computational benefits of using adaptive time stepping with increased grid points.
 
\begin{table}[h!]
\centering
\begin{tabular}{ |c|c|c|c|} 
 \hline
  & Time stepping & Positivity  & CPU time\\ 
  \hline \hline
 
 GM with $\Delta x = 0.01 $ & Fixed & Fails at $t = 0.299$&0.286$s$ until $t = 0.299$ \\
 
 BEM   $\Delta x = 0.01 $& Fixed & Success   &  0.374 $s$ until $t = 0.299$ \\
 
BEM   $\Delta x = 0.01 $& Adaptive & Success  &  0.317 $s$ until $t = 0.299$ \\%0.317 
  \hline
  
 GM with $\Delta x = 0.005$ & Fixed & Fails at $t =  1.09594$ & 0.602$s$ until $t =1.09594$\\ 
 
 BEM with $\Delta x = 0.005$ & Fixed & Success  & 1.08$s$ until $t =1.096$\\ 
 
 BEM with $\Delta x = 0.005$ & Adaptive & Success   & 0.412$s$ until $t =1.09678$\\ 
 
   \hline
  GM with $\Delta x = 0.0025$ & Fixed & Fails at $t =  3.4765$ & 2.959$s$ until $t =3.4765$\\
 
    BEM with $\Delta x = 0.0025$ & Fixed & Success  &4.727$s$ until $t = 3.477 $\\ 
    
    BEM with $\Delta x =  0.0025$ & Adaptive & Success  & 0.724$s$ until $t = 3.51201$\\ 
 \hline
\end{tabular}
\caption{Computational cost comparison of BEM and GM for examples discussed in Sec.~\ref{section: computational efficiency}.}
\end{table}

\section{Conclusion}
\label{section: conclusion}
In  this  paper,  we introduce a positivity-preserving finite difference method for the problem fiber-coating a vertical cylindrical fiber. While the current {state of the art} method (GM) achieves close agreement with experiments and successfully captures regime transitions, it struggles to match the flow profiles as the film thickness becomes small. In particular, the GM needs significant grid refinement to resolve very thin films without a numerical singularity. We prove that our BEM preserves positivity given $\mathcal{M}(h) = O(h^n)$ for $n \geq 2$ and furthermore that there exists a lower bound independent of grid size given an a posteriori Lipschitz bound on the solution (something that is always observed in experiments). By constructing a generalized entropy estimate, we extend the idea of positivity-preserving methods for basic lubrication equations to the problem involving cylindrical geometry, gravity, and nonlinear pressure. This technique has promise for thin liquid film equations with complex geometry, advection effect, and other surface tension effects. 

There are a number of directions one can pursue from this work. One obvious direction is to prove the convergence of the BEM. Such work would benefit from additional regularity and positivity results for the continuum PDE. Another direction is to generalize the method to the fully 2D fiber coating problem e.g. using ADI methods such as \cite{witelski2003adi} or to consider more general geometries as in \cite{greer2006fourth}. Finally, it would be interesting to consider other types of boundary conditions since the experiment is not periodic in space. The boundary conditions on an inlet and an outlet of the flow can change if other models are considered, such as one that includes a nozzle geometry \cite{ji2020modelling} or a thermal effect \cite{ji2021thermally}.

\section*{Acknowledgments}
This research is supported by the Simons Foundation Math + X Investigator Award number 510776. H.J. acknowledges support from {NSF grant DMS 2309774 and} NCSU FRPD Program.  B.K. acknowledges support from NSF grant DGE-1829071.

%%Vancouver style references.
%\bibliographystyle{alpha}
\bibliographystyle{model1-num-names}
\bibliography{first_draft_ref.bib}

\appendix
\section{Algorithms used for Numerical Simulation}
\label{asection: Algorithms used for Numerical Simulation}
\subsection{The Newton's Method and the fixed time step algorithm}

\RestyleAlgo{ruled}
\begin{algorithm}[H]
 \caption{The Newton's method for BEM}
 \label{algorithm: newton's method}
 \KwData{numerical solution $\mathbf{u}^{k}$, the current time step $\Delta t$, and tolerance $tol$ for the convergence success criteria.}
  \KwResult{\texttt{True} or \texttt{False} depending on whether the method succeed or fail.}
    \algrule
\ttfamily
NewtonMethod($\mathbf{u}^{k}$,$\Delta t$,$tol$):\\
 $\mathbf{u}^{k+1} = \mathbf{u}^{k}$\Comment*[r]{Initial guess for the Newton's method}
 \For{$i=0$ \KwTo $15$}{
    $\mathbf{f}(\mathbf{u}^{k}) =$ the left side of the equality of {e}quation $\eqref{method: BEM discrete time}$\Comment*[r]{Use~\eqref{method: GM discrete time} for GM}
    $\mathbf{u}^{k+1} = \mathbf{u}^{k} - (\nabla \mathbf{f}(\mathbf{u}^{k}))^{-1} \mathbf{f}(\mathbf{u}^{k})$;\\
    \If{$\lVert \mathbf{f}(\mathbf{u}^{k})\rVert_{\infty} < tol/10$}
    {
        break;
    }
    
  }
    \eIf{$\lVert \mathbf{f}(\mathbf{u}^{k})\rVert_{\infty} < tol$}
    {
       \Return True;
    }{
        \Return False;
    }
\end{algorithm}
The Newton's algorithm is specifically written for BEM~\eqref{method: BEM discrete time}, but setting $\mathbf{f}(\mathbf{u}^{k})$ as the left side expression of the equality of {e}quation (10) in Algorithm~\ref{algorithm: newton's method} results in the algorithm for GM~\eqref{method: GM discrete time}. The function \texttt{NewtonMethod} has the input of the numerical solution at $k$th time step $\mathbf{u}^{k}$, the current time step $\Delta t$, and the tolerance value $tol$ which determines the success or failure of the Newton's iteration. \texttt{NewtonMethod} returns \texttt{True} if $\lVert \mathbf{f}(\mathbf{u}^{k})\rVert_{\infty} < tol$ after the for loop and updates the numerical solution by setting $\mathbf{u}^{k} = \mathbf{u}^{k+1}$. \texttt{NewtonMethod} gives a chance of 15 iteration, but in practice, we see that the method satisfies $\lVert \mathbf{f}(\mathbf{u}^{k})\rVert_{\infty} < tol/10$ within 3-4 iteration. When $\lVert \mathbf{f}(\mathbf{u}^{k})\rVert_{\infty} \geq tol$, \texttt{NewtonMethod} returns \texttt{False}.

\RestyleAlgo{ruled}
\begin{algorithm}[H]
 \caption{Simulation with regular time stepping}
 \label{algorithm: fixed time stepping}
 \KwData{a discrete initial data $\mathbf{u}^{0}$, the time step $\Delta t$, the end time $t_{end}$,  and tolerance $tol$ for the convergence success criteria.}
  \KwResult{$\mathbf{u}^{k}$ at the $t_{end}$ if the simulation succeeds. Otherwise outputs $\mathbf{u}^{k}$ at the time of the simulation failure.}
    \algrule
\ttfamily
Simulate($\mathbf{u}^{0}$,$\Delta t$,$t_{end}$):\\
set $t = 0$, bad = 0, and $\mathbf{u}^{k} = \mathbf{u}^{0}$\;
\While{$t < t_{end}$}
{
    \eIf{\texttt{NewtonMethod($\mathbf{u}^{k}$, $\Delta t$, $tol$) == True}}
    {
        $t=t+\Delta t$ \Comment*[r]{Update time}
         $\mathbf{u}^{k} = \mathbf{u}^{k+1}$\Comment*[r]{Update the numerical solution}
         bad = 0\;
    }
    {
        bad = bad+1\;
        $\Delta t = \Delta t * 0.5$\Comment*[r]{Try the Newton's Method with smaller $\Delta t$}
        \If{bad $>4$}{
            exit(1)\Comment*[r]{Stop the simulation}
        }
    }
}
\end{algorithm}

In the case when \texttt{NewtonMethod} returns \texttt{False}, we decrease $\Delta t$ by 50\% and try \texttt{NewtonMethod} again with the same $\mathbf{u}^{k}$ and $tol$ (see Algorithm~\ref{algorithm: newton's method} and Algorithm~\ref{algorithm: adaptive time stepping}). Below is the algorithm using regular time step which is used to generate Figure~\ref{fig: evolution comparison simulation} and Figure~\ref{fig: comparison snapshot of BEM and GM}. Notice that $\Delta t$ is only decreased when \texttt{NewtonMethod} returns \texttt{False}. If \texttt{NewtonMethod} fails more than 4 consecutive times, we completely stop the simulation. 

\end{document}